\newtheorem{theorem}{Theorem}[section]
\newtheorem{lemma}[theorem]{Lemma}
\newtheorem{proposition}{Proposition}
\theoremstyle{definition}
\newtheorem{remark}{Remark}
\title[Rotor imbalance suppression by optimal control] 
{Rotor imbalance suppression by optimal control}
\author[Matteo Gnuffi and Dario Pighin and Noboru Sakamoto]{}
\keywords{industrial applications of optimal control; stabilization; rotor imbalance suppression; \L ojasiewicz inequality; turnpike theory; rotor vibration suppression; Value Iteration; Reinforcement Learning}
\email{gnuffi@math.unifi.it}
\email{dario.pighin@uam.es}
\email{noboru.sakamoto@nanzan-u.ac.jp}
\thanks{This manuscript has been developed during a secondment in the company ``Marposs S.p.A.'', under the supervision of Matteo Gnuffi and Francesco Ziprani, R\&D Manager. This project has received funding from the European Union’s Horizon 2020 research and innovation programme under the Marie Sklodowska-Curie grant agreement No 777822.  This work has been partially funded by the European Research Council (ERC) under the European Union's Horizon 2020 research and innovation program (grant agreement No. 694126-DyCon), grant MTM2017-82996-C2-1-R of MINECO (Spain), the Marie Curie Training Network ConFlex", the ELKARTEK project KK-2018/00083 ROAD2DC of the Basque Government,  and Nonlocal PDEs: Analysis, Control and Beyond", AFOSR Grant
	FA9550-18-1-0242.\\
	Special thanks go to professor Enrique Zuazua for his suggestions.\\
	The authors acknowledge the engineers Riccardo Cipriani, Stefano Fenara, Andrea Ferrari, Samuele Martelli and Alessandro Ruggeri for their contributions to the manuscript.\\
	We thank professor Davide Barbieri for his support during the secondment.\\
	The authors gratefully acknowledge referees for their interesting remarks.\\
(Noboru Sakamoto) Supported, in part, by JSPS KAKENHI Grant
Number JP19K04446 and by Nanzan University Pache Research Subsidy I-A-2 for 2019 academic year.}
\begin{document}
	\maketitle
	
	\centerline{\scshape Matteo Gnuffi}
	\medskip
	{\footnotesize
		\centerline{Marposs SpA}
		\centerline{40010 Bentivoglio, Italy}
	} 

	\medskip
	
	\centerline{\scshape Dario Pighin$^*$}
	{\footnotesize
		\centerline{Departamento de Matem\'aticas, Universidad Aut\'onoma de Madrid}
		\centerline{28049 Madrid, Spain}
	} 
	\medskip
	{\footnotesize
		\centerline{Chair of Computational Mathematics, Fundaci\'on Deusto}
		\centerline{Avda. Universidades,
			24, 48007, Bilbao, Basque Country, Spain}
	} 

	\medskip

\centerline{\scshape Noboru Sakamoto}
{\footnotesize
	\centerline{Faculty of Science and Engineering, Nanzan University}
	\centerline{Yamazato-cho, Showa-ku, Nagoya, 464-8673, Japan}
} 

	\bigskip
	

	\begin{abstract}
An imbalanced rotor is considered. A system of moving balancing masses is given. We determine the optimal movement of the balancing masses to minimize the imbalance on the rotor. The optimal movement is given by an open-loop control solving an optimal control problem posed in infinite time. By methods of the Calculus of Variations, the existence of the optimum is proved and the corresponding optimality conditions have been derived. Asymptotic behavior of the control system is studied rigorously. By \L ojasiewicz inequality, convergence of the optima as time $t\to +\infty$ towards a steady configuration is ensured. An explicit estimate of the convergence rate is given. This guarantees that the optimal control stabilizes the system. In case the imbalance is below a computed threshold, the convergence occurs exponentially fast. This is proved by the Stable Manifold Theorem applied to the Pontryagin optimality system. Moreover, a closed-loop control strategy based on Reinforcement Learning is proposed. Numerical simulations have been performed, validating the theoretical results.
		
	\end{abstract}
	
	\section{Introduction}
	
    Imbalance vibration affects several rotor dynamic systems. Indeed, often times, rotor's mass distribution is not homogeneous, due to wear, damage and other reasons. The purpose of this paper is to present a control theoretical approach to rotors imbalance suppression. A balancing device, made of moving masses, is given. We look for the optimal movement of a system of balancing masses to minimize the vibrations.

\begin{figure}[htp]
	\begin{center}
		\includegraphics[width=6cm]{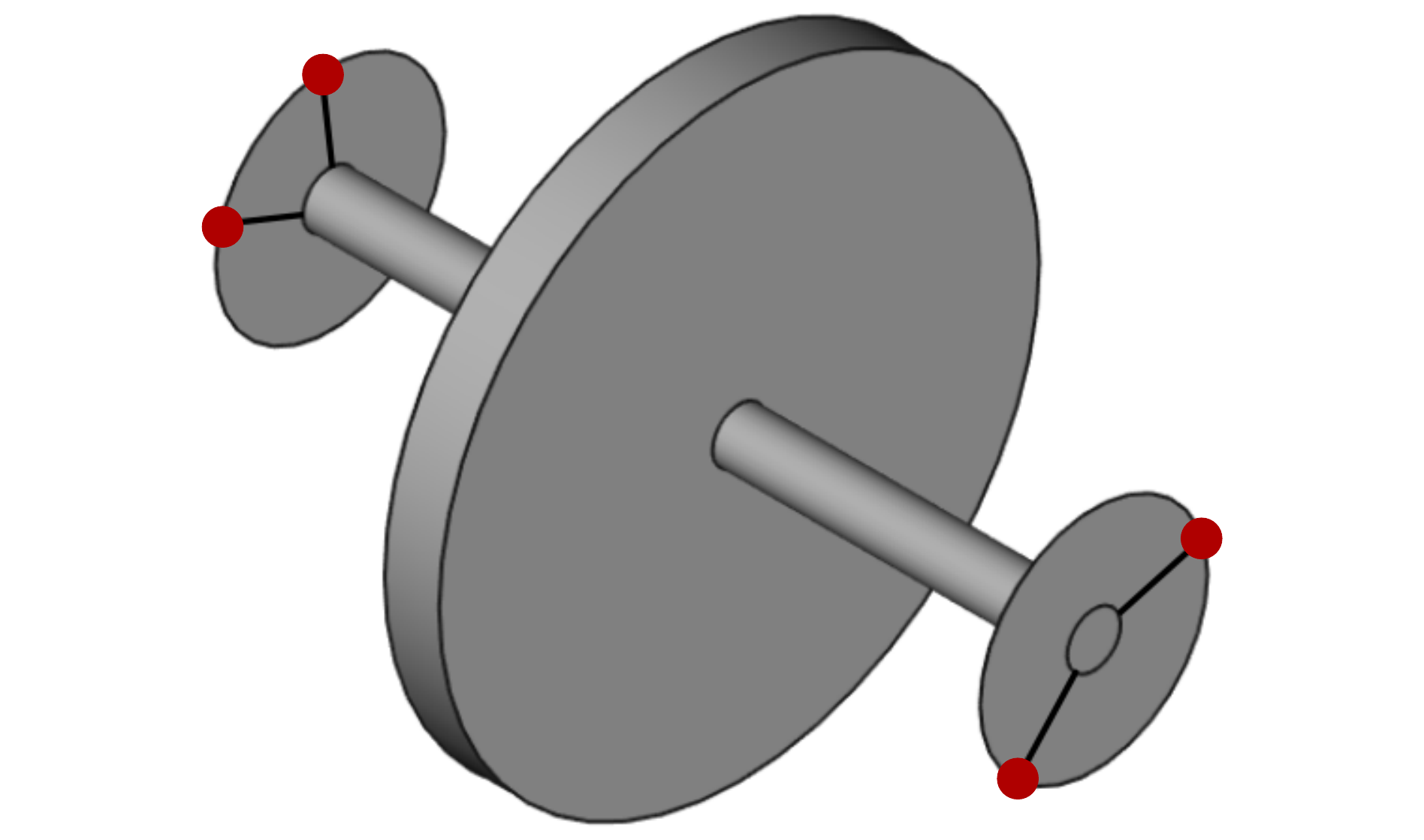}\\
		\caption{The rotor and the balancing device are represented. In the special case represented, the balancing heads are located at the endpoints of the spindle. The four balancing masses (two for each balancing head) are drawn in red.}
		\label{spindlegrinder8mod}
	\end{center}
\end{figure}

The topic is very classical in engineering literature. Indeed, balancing devices are ubiquitous in rotor dynamic systems. 
For instance, grinding machines often get deteriorated during their operational life-cycle. This leads to dangerous imbalance vibrations, which affects their performances while shaping objects (see, for instance,  \cite{HASSUI2003855,HOSHI1986231,ZENG1994495,chen1996grinding}). 
Imbalance is a significant concern for wind turbines as well. In this case, the imbalance may affect the efficiency of power production and the life-cycle of the turbine. If the vibrations become too large, the turbine may collapse. For this reason, vibration detection and correction systems have been developed (see the U.S. patent \cite{jeffrey2012method}). 
Balancing devices have been developed to stabilize CD-ROM drives, washing machines and spacecrafts (see \cite{CHUNG200331,RAJALINGHAM2006169,CHAO20031289,chao2007nonplanar,KIM2005547,xu2015field}). 
Another classical topic in engineering is car's wheels balance. Indeed, easily the wheels can go out of alignment from encountering potholes and/or striking raised objects. Misalignment may cause irregular wear of the tyres. Suspensions components may be damaged as well. For this reason, refined machines have been designed for wheel balancing (see, e.g., \cite[chapter 44]{erjavec2014automotive}).
Vibrations suppression may also involve optimized fluids, like magnetorheological fluids \cite{xu2015field}.
The classical engineering literature on imbalance suppression is concerned with imbalance detection and/or imbalance correction.

In the present work, we address the imbalance correction problem. The imbalance is an input. We consider an imbalanced rotor rotating about a fixed axis at constant angular velocity. We work in the general case of dynamical imbalance, where the imbalanced rotor exert both a force and a torque on the rotation axle. In this context, we suppose that two balancing heads are mounted along two planes orthogonal to the rotation axis. It is assumed that the balancing heads are integral with the rotor, i.e. they rotate together with the rotor. Each balancing head is made of two masses, free to rotate about the rotation axis. Their angular movements are measured with respect to a rotor-fixed reference frame. We can control the balancing masses by actuators transmitting the power with or without contact.\\
An initial configuration of the balancing masses is given. Our goal is to determine four angular trajectories steering the masses from their initial configuration to a steady configuration, where the balancing masses compensate the imbalance. Note that, differently from the classical wheel balancing machines, our balancing device rotates together with the rotor and the rotor is moving while the balancing procedure is accomplished. This motivates us to formulate the problem as a dynamic optimization problem so that transient responses are also taken into account. The method is designed for high speed applications.

A control problem is formulated. We exhibit an open-loop control strategy to move the balancing heads from their initial configuration to a steady configuration, where they compensate the imbalance of the rotor. First of all, viewing the problem in the framework of the Calculus of Variations, the existence of the optimum is proved and the related Euler-Lagrange optimality conditions have been derived. Then, asymptotic behavior of the control system is analyzed rigorously. By \L ojasiewicz inequality, the stabilization of the optimal trajectories towards steady optima is proved. In any condition, explicit bounds of the convergence rate have been obtained. In case the imbalance is below a given threshold, we provide an exponential estimate of the stabilization. Such exponential decay is obtained, by seeing the problem as an optimal control problem, thus writing the Optimality Condition as a first order Pontryagin system. In this context, we prove the hyperbolicity of the Pontryagin system around steady optima, to apply the Stable Manifold Theorem (see \cite[Corollary page 115]{perko2013differential} and \cite{TGS}). Our conclusions fit in the general framework of Control Theory and, in particular, of stabilization, turnpike and controllability (see e.g. \cite{fernandez2003control,sontag1998mathematical,zuazua2007controllability,porretta2013long,trelat2015turnpike,zhu2017geometric,esteve2020turnpike}). In addition, we propose a closed-loop approach using Reinforcement Learning \cite{RDP,sutton2011reinforcement,bokanowski2015value}.

The remainder of the manuscript is organized as follows. In section \ref{sec:1}, we conceive a physical model of the rotor together with the balancing device. In section \ref{sec:2}, we formulate a control problem to determine stabilizing trajectories for the balancing masses. We summarize our achievements in Proposition \ref{prop_group}. The steady problem is analyzed in subsection \ref{subsec:2.2}, where the steady optima are determined. In subsection \ref{subsec:2.3}, we prove some general results. In Proposition \ref{prop3_gen}, the existence of the global minimizer is proved. In Proposition \ref{prop_EL_no_final_condition}, the Optimality Conditions are deduced in the form of Euler-Lagrange equations or equivalently as a state-adjoint state Pontryagin system.
In Proposition \ref{prop4gen} and Proposition \ref{prop_stab_gen_exp} the asymptotic behaviour of the optima is analyzed in the spirit of stabilization and turnpike theory (see \cite{porretta2013long,trelat2015turnpike,TGS,esteve2020turnpike}). The \L ojasiewicz inequality is employed to show that, in any condition, the optima stabilize towards a steady configuration. In case the imbalance does not violate a computed threshold, the stabilization is exponentially fast. This is shown as a consequence of the hyperbolicity of the Pontryagin system around steady optima and the Stable Manifold Theorem. Numerical simulations are performed in subsection \ref{subsec:2.5}. The exponential stabilization of the optima emerges, thus validating the theoretical results. In section \ref{sec:3}, Reinforcement Learning is employed to design a feedback solution. The notation is introduced in table \ref{notation_table}.

\begin{table}
	\centering
	\begin{tabular}{|c|p{380pt}|}
		\multicolumn{2}{|c|}{Notation}\\
		\hline
		\hline
		$\Omega$ & rigid body \\
		\hline
		$\omega$ & angular velocity \\
		\hline
		$(O;(x,y,z))$ & $\Omega$-fixed reference frame \\
		\hline
		$\pi_1$ & first balancing plane \\
		\hline
		$a$ & distance of the first balancing plane from the origin \\
		\hline
		$\pi_2$ & second balancing plane \\
		\hline
		$b$ & distance of the second balancing plane from the origin \\
		\hline
		$m_i$ & mass of balancing masses in $\pi_i$ \\
		\hline
		$P_{i,1}$ & position of the first balancing mass in $\pi_i$ \\
		\hline
		$P_{i,2}$ & position of the second balancing mass in $\pi_i$ \\
		\hline
		$r_i$ & distance from the axle of the balancing masses in $\pi_i$ \\
		\hline
		$b_i$ & the bisector of the angle generated by $\overset{\longrightarrow}{OP_{i,1}}$ and $\overset{\longrightarrow}{OP_{i,2}}$ (see figure \ref{angle_grah_1}) \\
		\hline
		$\alpha_i$ & \textit{intermediate} angle, the angle between the $x$-axis and the bisector $b_i$ \\
		\hline
		$\gamma_i$ & \textit{gap} angle, the angle between $\overset{\longrightarrow}{OP_{i,1}}$ and the bisector $b_i$ \\
		\hline
		$F$ & force exerted by the imbalanced body $\Omega$ on the rotation axis at the origin O \\
		\hline
		$N$ & momentum exerted by the imbalanced body $\Omega$ on the rotation axis, with respect to the pole O \\
		\hline
		$P_1$ & $P_1\coloneqq (0,0,-a)$ intersection of the first balancing plane $\pi_1$ and the $z$ axis \\
		\hline
		$P_2$ & $P_2\coloneqq (0,0,b)$ intersection of the second balancing plane $\pi_2$ and the $z$ axis \\
		\hline
		$F_1$ and $F_2$ & The force $F$ and the momentum $N$ are equivalent to force $F_1$ acting at $P_1$ and force $F_2$ acting at $P_2$ \\
		\hline
		$B_1$ & Balancing force in the first balancing plane $\pi_1$ \\
		\hline
		$B_2$ & Balancing force in the second balancing plane $\pi_2$ \\
		\hline
		$F_{ris,i}$ & $F_{ris,i}=B_i+F_i$ resulting force in $\pi_i$ \\
		\hline
		$G$ & $G\coloneqq\|B_1+F_1\|^2+\|B_2+F_2\|^2$ imbalance indicator, measuring the imbalance on the overall system made of rotor and balancing heads \\
		\hline
		$G_i$ & imbalance indicator for the first balancing plane $\pi_i$ \\
		\hline
		$\Phi_0$ & $\Phi_0=\left(\alpha_{1,0},\gamma_{1,0};\alpha_{2,0},\gamma_{2,0}\right)$ initial configuration for the balancing masses\\
		\hline
		$\Phi(t)$ & $\Phi(t)=\left(\alpha_1(t),\gamma_1(t);\alpha_2(t),\gamma_2(t)\right)$ trajectory for the balancing masses, \textit{state} of the control problem \\
		\hline
		$\psi(t)$ & $\psi(t)\coloneqq (\psi_1(t),\psi_2(t);\psi_3(t),\psi_4(t))$ variable for the time derivative of $\Phi$ \\
		\hline
		$\beta$ & weighting parameter in the cost functional \\
		\hline
		$L$ & Lagrangian \\
		\hline
		$\hat{G}$ & $\hat{G}\coloneqq G-\inf G$ \\
		\hline
		$\mathscr{S}$ & $\mathscr{S}\coloneqq \mbox{argmin}(G)$ set of minimizers of the imbalance indicator $G$ \\
		\hline
		$\mathscr{A}$ & set of admissible trajectories \\
		\hline
		$J$ & cost functional in the control problem \\
		\hline
		$\overline{\Phi}$ & optimal steady state \\
		\hline
	\end{tabular}
	\caption{Notation table. Subsection \ref{subsec:2.3} generalizes the notation to general control problems. Index $i=1,2$.}
	\label{notation_table}
\end{table}

\newpage
	
\section{The model}
\label{sec:1}
Assume the rotor is a rigid body $\Omega\subset\mathbb{R}^3$ rotating about an axis at a constant angular velocity $\omega$. Often times the rotor mass distribution is not homogeneous, producing imbalance in the rotation. This leads to dangerous vibrations. Our goal is to find the optimal movement of a system of balancing masses in order to minimize the imbalance.

Consider $(O;(x,y,z))$ $\Omega$-fixed reference frame.
By definition, the axes $(x,y)$ rotate about axis $z$ at a constant angular velocity $\omega$.

\begin{figure}
	\begin{overpic}[scale=0.35]{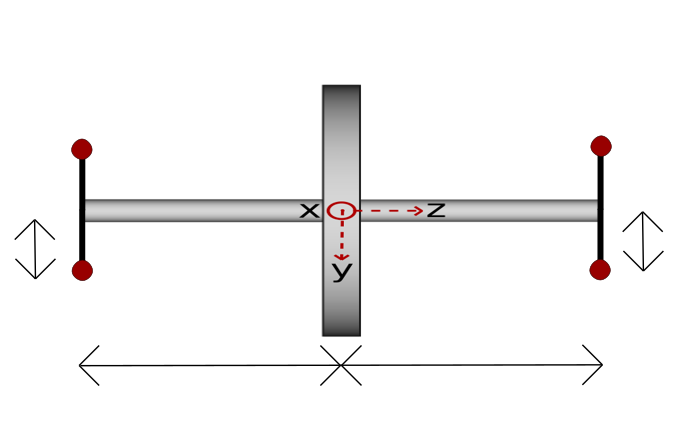}
		\put (-3.85, 24.0) {$r_1$}
		\put (98.85, 24.0) {$r_2$}
		\put (30, 3.0) {$a$}
		\put (68, 3.0) {$b$}
	\end{overpic}
	\caption{Front view of the system made of rotor and balancing device.}
	\label{spindlegrinder7_frontmodmeasnew}
\end{figure}

\begin{figure}
	\centering
	\begin{subfigure}{.5\textwidth}
		\centering
		\includegraphics[width=\textwidth]{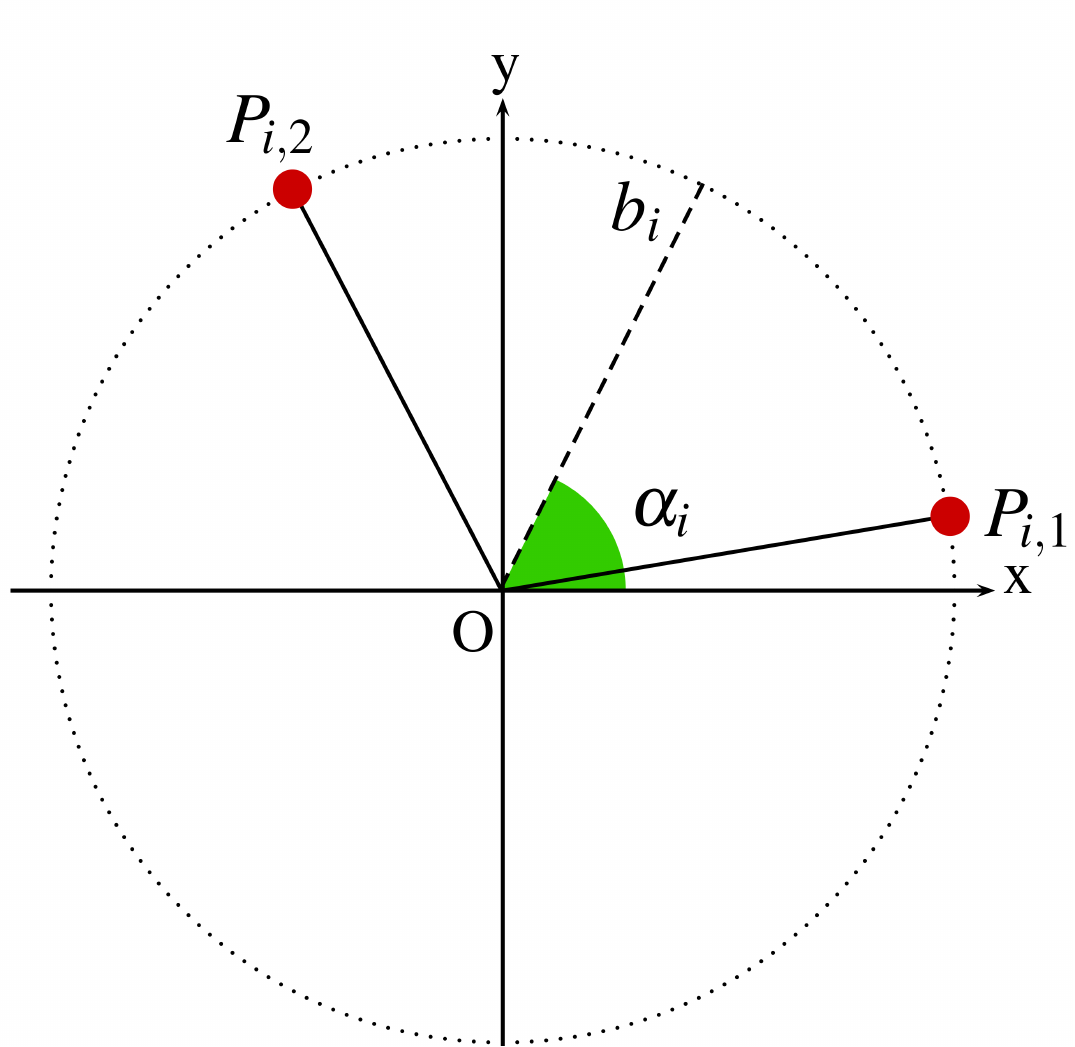}
		\caption{\textit{intermediate} angle}
		\label{fig:sub1}
	\end{subfigure}%
	\begin{subfigure}{.5\textwidth}
		\centering
		\includegraphics[width=\textwidth]{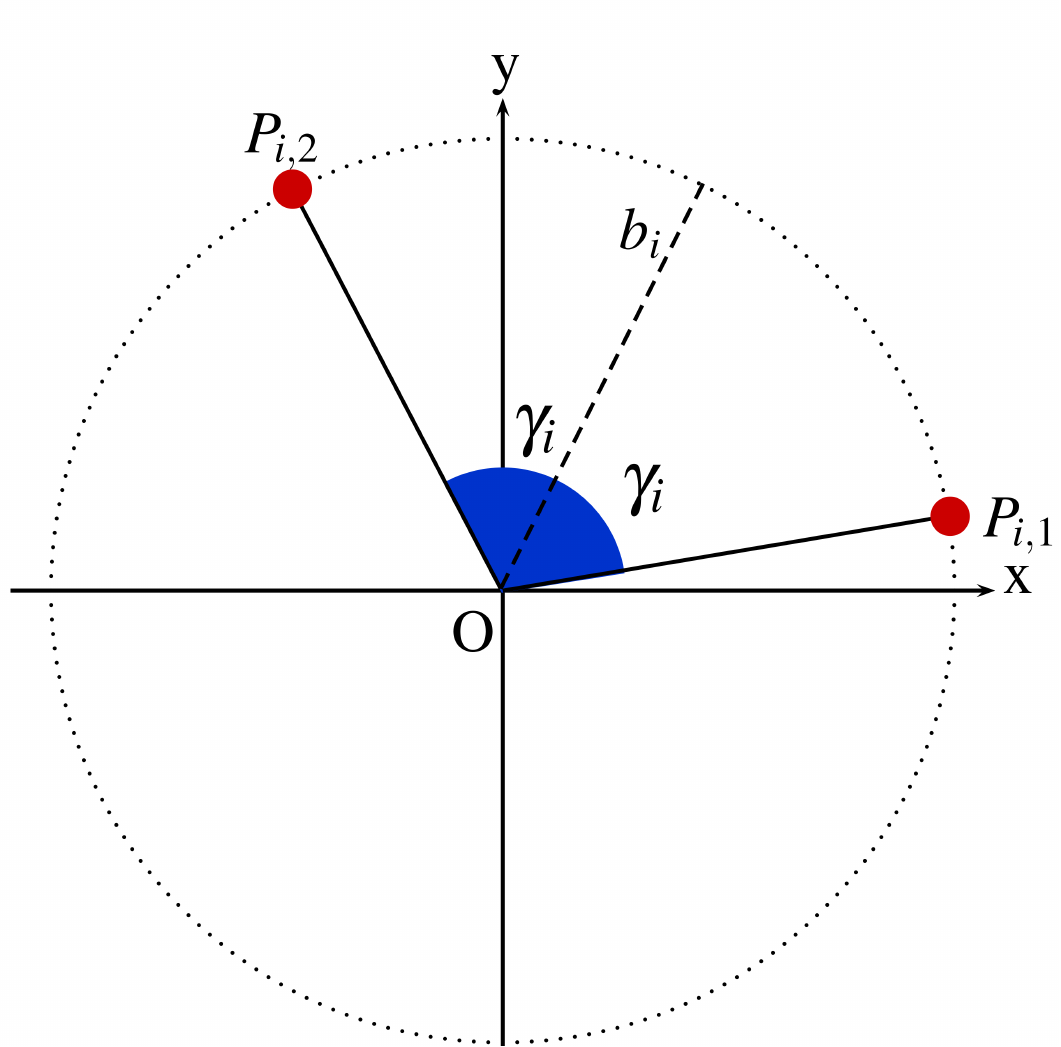}
		\caption{\textit{gap} angle}
		\label{fig:sub2}
	\end{subfigure}
	\caption{One balancing head is considered. The balancing masses $(m_i,P_{i,1})$ and $(m_i,P_{i,2})$ are drawn in red. The bisector of the angle generated by $\overset{\longrightarrow}{OP_{i,1}}$ and $\overset{\longrightarrow}{OP_{i,2}}$ is the dashed line. The \textit{intermediate} angle $\alpha_i$ and the \textit{gap} angle $\gamma_i$ give the position of the balancing masses in each balancing head. For general mathematical notation, we refer to the section at the end of the manuscript.}\label{angle_grah_1}
\end{figure}

The balancing device (see figures \ref{spindlegrinder8mod} and \ref{spindlegrinder7_frontmodmeasnew}) is made up two heads lying in two planes orthogonal to the rotation axis $z$. Each head is made of a pair of balancing masses, which are free to rotate on a plane orthogonal to the rotation axis $z$.  Namely, we have
\begin{itemize}
	\item two planes $\pi_1\coloneqq \left\{z=-a\right\}$ and $\pi_2\coloneqq \left\{z=b\right\}$, with $a$, $b\geq 0$;
	\item two mass-points $(m_1,P_{1,1})$ and $(m_1,P_{1,2})$ lying on $\pi_1$ at distance $r_1$ from the axis $z$, i.e.,\\
	in the reference frame $(O;(x,y,z))$
	\begin{eqnarray}\label{P1}
	\begin{dcases}
	P_{1,1;x}=&r_1\cos(\alpha_1-\gamma_1)\\
	P_{1,1;y}=&r_1\sin(\alpha_1-\gamma_1)\\
	P_{1,1;z}=&-a,\\
	\end{dcases}\nonumber\\
	\mbox{and\hspace{3.5 cm}}\\
	\begin{dcases}
	P_{1,2;x}=&r_1\cos(\alpha_1+\gamma_1)\\
	P_{1,2;y}=&r_1\sin(\alpha_1+\gamma_1)\\
	P_{1,2;z}=&-a;\nonumber\\
	\end{dcases}
	\end{eqnarray}
	\item two mass-points $(m_2,P_{2,1})$ and $(m_2,P_{2,2})$ lying on $\pi_2$ at distance $r_2$ from the axis $z$, namely, in the reference frame $(O;(x,y,z))$
	\begin{eqnarray}\label{P2}
	\begin{dcases}
	P_{2,1;x}=&r_2\cos(\alpha_2-\gamma_2)\\
	P_{2,1;y}=&r_2\sin(\alpha_2-\gamma_2)\\
	P_{2,1;z}=&b,\\
	\end{dcases}\nonumber\\
	\mbox{and\hspace{3.5 cm}}\\
	\begin{dcases}
	P_{2,2;x}=&r_2\cos(\alpha_2+\gamma_2)\\
	P_{2,2;y}=&r_2\sin(\alpha_2+\gamma_2)\\
	P_{2,2;z}=&b.\nonumber\\
	\end{dcases}
	\end{eqnarray}
\end{itemize}
For any $i=1,2$, let $b_i$ be the bisector of the angle generated by $\overset{\longrightarrow}{OP_{i,1}}$ and $\overset{\longrightarrow}{OP_{i,2}}$ (see figure \ref{angle_grah_1}). For any $i=1,2$, the \textit{intermediate} angle $\alpha_i$ is the angle between the $x$-axis and the bisector $b_i$, while the \textit{gap} angle $\gamma_i$ is the angle between $\overset{\longrightarrow}{OP_{i,1}}$ and the bisector $b_i$.
Note that the angles $\alpha_i$ and $\gamma_i$ are defined with respect to the $\Omega$-fixed reference frame $(O;(x,y,z))$. Indeed, the balancing device described above is integral with the body $\Omega$. Furthermore, we observe that on the one hand, in view of avoiding the generation of torque in each single head, the two balancing masses composing a single head are placed on a single plane. On the other hand, the available balancing heads are placed on two separate planes and torque may be generated by the composed action of the heads.

Following a classical approach, the imbalance may be described as the force $F$ and the momentum $N$ exerted by the imbalanced body $\Omega$ on the rotation axis. The force is applied at the origin O. The momentum is computed with respect to the pole O. Both the force and the momentum are supposed to be orthogonal to the rotation axis $z$. As we mentioned, $F$ and $N$ are given data.

In $(O;(x,y,z))$, set $P_1\coloneqq (0,0,-a)$, $P_2\coloneqq (0,0,b)$, $F\coloneqq (F_x,F_y,0)$ and $N\coloneqq (N_x,N_y,0)$. By imposing the equilibrium condition on forces and momenta, the force $F$ and the momentum $N$ can be decomposed into a force $F_1$ exerted at $P_1$ contained in plane $\pi_1$ and a force $F_2$ exerted at $P_2$ contained in $\pi_2$
\begin{equation}\label{F_1}
F_1=
\frac{1}{a+b}\begin{tikzpicture}[baseline=(current bounding box.center)]
\matrix (b) [matrix of math nodes,nodes in empty cells,right delimiter={]},left delimiter={[} ]{
	bF_x-N_y  \\
	bF_y+N_x  \\
	0  \\
} ;
\end{tikzpicture}
\hspace{0.3 cm}\mbox{and}\hspace{0.3 cm}
F_2=
\frac{1}{a+b}\begin{tikzpicture}[baseline=(current bounding box.center)]
\matrix (b) [matrix of math nodes,nodes in empty cells,right delimiter={]},left delimiter={[} ]{
	aF_x+N_y  \\
	aF_y-N_x  \\
	0  \\
} ;
\end{tikzpicture}.
\end{equation}

In each plane, we are able to generate a force to balance the system, by moving the balancing masses described in \eqref{P1} and \eqref{P2}.

In particular, by trigonometric formulas
\begin{itemize}
	\item in plane $\pi_1$, we compensate force $F_1$ by the centrifugal force:
	\begin{equation}\label{B_1}
	B_1=2m_1r_1\omega^2\cos(\gamma_1)\left(\cos(\alpha_1),\sin(\alpha_1)\right);
	\end{equation}
	\item in plane $\pi_2$, we compensate force $F_2$ by the centrifugal force:
	\begin{equation}\label{B_2}
	B_2=2m_2r_2\omega^2\cos(\gamma_2)\left(\cos(\alpha_2),\sin(\alpha_2)\right).	
	\end{equation}
\end{itemize}

The overall imbalance of the system is then given by the resulting force in $\pi_1$
\begin{equation}
F_{ris,1}=B_1+F_1
\end{equation}
and the resulting force in $\pi_2$
\begin{equation}
F_{ris,2}=B_2+F_2.
\end{equation}
Note that, if the balancing masses are moved incorrectly, we may increase the imbalance on the system.

We introduce the imbalance indicator
\begin{equation}\label{G}
G\coloneqq\|B_1+F_1\|^2+\|B_2+F_2\|^2.
\end{equation}
The above quantity measures the imbalance on the overall system made of rotor and balancing heads.


By \eqref{B_1} and \eqref{B_2}, we observe that
\begin{equation}\label{split}
G(\alpha_1,\gamma_1,\alpha_2,\gamma_2)=G_1(\alpha_1,\gamma_1)+G_2(\alpha_2,\gamma_2),
\end{equation}
where
\begin{eqnarray*}
	G_1(\alpha_1,\gamma_1)&\coloneqq&\left[\left|2m_1r_1\omega^2\cos(\gamma_1)\cos(\alpha_1)+F_{1,x}\right|^2\right.\nonumber\\
	&\;&\left.+\left|2m_1r_1\omega^2\cos(\gamma_1)\sin(\alpha_1)+F_{1,y}\right|^2\right]
\end{eqnarray*}
and
\begin{eqnarray*}
	G_2(\alpha_2,\gamma_2)&\coloneqq&\left[\left|2m_2r_2\omega^2\cos(\gamma_2)\cos(\alpha_2)+F_{2,x}\right|^2\right.\nonumber\\
	&\;&\left.+\left|2m_2r_2\omega^2\cos(\gamma_2)\sin(\alpha_2)+F_{2,y}\right|^2\right].
\end{eqnarray*}

\section{The control problem}
\label{sec:2}

An initial configuration $\Phi_0=\left(\alpha_{1,0},\gamma_{1,0};\alpha_{2,0},\gamma_{2,0}\right)$ for the balancing masses is given.\\
Our goal is to find a control strategy such that:
\begin{itemize}
	\item the balancing masses move from $\Phi_0$ to a final configuration $\overline{\Phi}=\left(\overline{\alpha}_1,\overline{\gamma}_1;\overline{\alpha}_2,\overline{\gamma}_2\right)$, where they compensate the imbalance;
	\item the imbalance should not increase and velocities of the masses are kept small during the correction process. 
\end{itemize}
In this first part, we suppose that we do not have a real-time feedback concerning the imbalance of the system. For this reason, we design an open-loop control. A closed-loop strategy is designed in section \ref{sec:3}.

Accordingly, we introduce a control problem to steer our system to a stable configuration, which minimizes the imbalance. In the context of the model described in section \ref{sec:1}, we choose as \textit{state} $\Phi(t)\coloneqq (\alpha_1(t),\gamma_1(t);\alpha_2(t),\gamma_2(t))$, where $\alpha_i(t)$ and $\gamma_i(t)$ are the angles regulating the position of the four balancing masses, as illustrated in \eqref{P1} and \eqref{P2}.\\
The \textit{control} $\psi(t)\coloneqq (\psi_1(t),\psi_2(t);\psi_3(t),\psi_4(t))$ is the time derivative of the state, i.e. its components are the time derivatives of the angles $\Phi_i(t)$. Namely, the state equation is
\begin{equation}\label{}
\begin{dcases}
\frac{d}{dt}\Phi=\psi\hspace{0.6 cm} &t\in (0,+\infty)\\
\Phi(0)=\Phi_0.\\
\end{dcases}
\end{equation}
Note that we are in the particular case of the Calculus of Variations. The time interval is infinite and special attention has to be paid for the limiting behavior of the solution. 

The Lagrangian $L:\mathbb{T}^4\times \mathbb{R}^4\longrightarrow \mathbb{R}$ reads as
\begin{equation}
L\left(\Phi,\psi\right) \coloneqq \frac{1}{2}\left[\|\psi\|^2+{\beta}\hat{G}(\Phi)\right],
\end{equation}
where $\beta>0$ is a parameter to be fixed and $\hat{G}\coloneqq G-\inf G$, $G$ being the imbalance indicator introduced in \eqref{G}. Note that for any $\overline{\Phi}\in\mathscr{S}\coloneqq \mbox{argmin}(G)$, $\hat{G}(\overline{\Phi})=G(\overline{\Phi})-\inf G=\inf G-\inf G=0$, namely $\mathscr{S}$ coincides with the zero set of $\hat{G}$. We have introduced $\hat{G}$ to guarantee the integrability of the Lagrangian along admissible trajectories over the half-line $(0,+\infty)$.\\
In the above Lagrangian, there is a trade-off between the
cost of controlling the system to a stable regime and the velocity of the balancing masses, with respect to the rotor. If $\beta$ is large, the primary concern for the optimal strategy is to minimize the cost of controlling, while if $\beta$ is small our priority is to minimize the velocities.

Let $\Phi_0\in \mathbb{T}^4$ be an initial configuration. We introduce the space of admissible trajectories
\begin{equation}\label{admissible_trajectories}
\mathscr{A}\coloneqq \left\{\Phi\in H^1_{loc}([0,+\infty);\mathbb{T}^4) \hspace{0.3 cm} \big| \hspace{0.3 cm} \Phi(0)=\Phi_0,\hspace{0.3 cm}\mbox{and}\hspace{0.3 cm}L(\Phi,\dot{\Phi})\in L^1(0,+\infty)\right\},
\end{equation}
where the Sobolev space $H^1_{loc}((0,+\infty);\mathbb{T}^4)$ is defined in \eqref{H1loc} (section \ref{sec:Notation}). Note that the requirement $L(\Phi,\dot{\Phi})\in L^1(0,+\infty)$ is equivalent to
\begin{equation}
\dot{\Phi}\in L^2(0,+\infty)\hspace{0.3 cm}\mbox{and}\hspace{0.3 cm}G(\Phi)-\inf G\in L^1(0,+\infty).
\end{equation}

Our goal is to minimize the functional $
J:\mathscr{A} \longrightarrow \mathbb{R}$
\begin{equation}\label{functional}
J\left(\Phi\right) \coloneqq\frac{1}{2}\int_0^{\infty} \left[\|\dot{\Phi}\|^2+{\beta}\hat{G}(\Phi)\right] dt.
\end{equation}

\subsection{Statement of the main result}
\label{subsec:2.1}

We state now our main result.

\begin{proposition}\label{prop_group}
	Consider the functional \eqref{functional}. For $i=1,2$, set
	\begin{equation}\label{c_J_i}
	c^i\coloneqq \frac{1}{{2m_ir_i\omega^2}}\left(F_{i,x},F_{i,y}\right),
	\end{equation}
	where the above notation has been introduced in table \ref{notation_table}. Then,
	\begin{enumerate}
		\item there exists $\Phi\in \mathscr{A}$ minimizer of $J$;
		\item $\Phi=\left(\alpha_1,\gamma_1;\alpha_2,\gamma_2\right)$
		is $C^{\infty}$ smooth and, for $i=1,2$, the following Euler-Lagrange equations are satisfied, for $t>0$
	\end{enumerate}
	\begin{equation}\label{EL_new}
	\begin{cases}
	-\ddot{\alpha}_i=\beta\cos\left(\gamma_i\right)\left[-c^i_1\sin\left(\alpha_i\right)+c^i_2\cos\left(\alpha_i\right)\right]\\
	-\ddot{\gamma}_i=-\beta\sin\left(\gamma_i\right)\left[c^i_1\cos(\alpha_i)+c^i_2\sin(\alpha_i)-\cos(\gamma_i)\right]\\
	\alpha_i(0)=\alpha_{0,i},\hspace{0.16 cm}\gamma_i(0)=\gamma_{0,i}, \hspace{0.16 cm} \dot{\Phi}(T)\underset{T\to +\infty}{\longrightarrow}0.\\
	\end{cases}
	\end{equation}
	\begin{enumerate}
		\item[(3)] for any optimal trajectory $\Phi$ for \eqref{functional}, there exists $\overline{\Phi}\in\mathscr{S}$ such that
		\begin{equation}\label{prop_group_eq1}
		\Phi(t)\underset{t\to +\infty}{\longrightarrow}\overline{\Phi},
		\end{equation}
		\begin{equation}\label{prop_group_eq2}
		\dot{\Phi}(t)\underset{t\to +\infty}{\longrightarrow}0.
		\end{equation}
		and
		\begin{equation}\label{prop_group_eq3}
		\left|\hat{G}\left(\Phi(t)\right)\right|\underset{t\to +\infty}{\longrightarrow}0.
		\end{equation}
		If, in addition
	\end{enumerate}
	\begin{equation}\label{prop_group_eq4}
	m_1r_1 > \frac{\sqrt{F_{1,x}^2+F_{1,y}^2}}{2\omega^2}\hspace{0.3 cm}\mbox{and}\hspace{0.3 cm}m_2r_2> \frac{\sqrt{F_{2,x}^2+F_{2,y}^2}}{2\omega^2},
	\end{equation}
	\begin{enumerate}
		\item[] we have the exponential estimate for any $t\geq 0$
	\end{enumerate}
	\begin{equation}\label{prop_group_eq6}
	\|\Phi(t)-\overline{\Phi}\|+\|\dot{\Phi}(t)\|+\left|G\left(\Phi(t)\right)\right|\leq C\exp\left(-\mu t\right),
	\end{equation}
	\begin{enumerate}
		\item[]
		with $C, \hspace{0.07 cm}\mu >0$ independent of $t$.
	\end{enumerate}
\end{proposition}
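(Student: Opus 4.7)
The plan is to establish the existence-and-smoothness parts by the direct method of the calculus of variations, the convergence parts by a Barbalat argument together with a \L ojasiewicz-Simon estimate, and the exponential bound by hyperbolicity of the Pontryagin system combined with the Stable Manifold Theorem. To begin, $\mathscr A$ is non-empty: join $\Phi_0$ to any $\overline\Phi\in\mathscr S$ by a smooth curve on $[0,1]$ and extend it constantly for $t\ge 1$, which yields a trajectory in $\mathscr A$ since $\hat G$ vanishes on $\mathscr S$. For a minimising sequence $\Phi_n$ the uniform bound on $J(\Phi_n)$ produces a uniform bound on $\|\dot\Phi_n\|_{L^2(0,\infty)}$, and together with the common initial datum and compactness of $\mathbb T^4$ this gives uniform $H^1_{\mathrm{loc}}$-bounds. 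After diagonal extraction, weak convergence in $H^1_{\mathrm{loc}}$ combined with local uniform convergence, lower semicontinuity of the kinetic term on $[0,T]$ and outer Fatou in $T$ yields the existence of a minimiser. The Euler-Lagrange equations \eqref{EL_new} follow from inner variations compactly supported in $(0,\infty)$, and $C^\infty$ regularity is a bootstrap since the right-hand side of \eqref{EL_new} is analytic in $\Phi$. Finally, \eqref{EL_new} bounds $\ddot\Phi$ uniformly, hence $\dot\Phi$ is uniformly continuous, and since $\dot\Phi\in L^2(0,\infty)$ Barbalat's lemma gives $\dot\Phi(T)\to 0$.

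For the convergence statements, the same Barbalat argument applied to the Lipschitz function $t\mapsto \hat G(\Phi(t))$ together with $\hat G(\Phi)\in L^1$ yields \eqref{prop_group_eq3} and hence $\mathrm{dist}(\Phi(t),\mathscr S)\to 0$. To upgrade this to convergence to a single limit \eqref{prop_group_eq1} I would exploit the real analyticity of $\hat G$ via a \L ojasiewicz gradient inequality near $\mathscr S$, combined with the first integral provided by the autonomy of $L$: along any EL solution the Hamiltonian
\begin{equation*}
E(t) \;=\; \tfrac12\|\dot\Phi(t)\|^2 - \tfrac{\beta}{2}\hat G(\Phi(t))
\end{equation*}
is conserved, and the already-established $\dot\Phi\to 0$, $\hat G(\Phi)\to 0$ force $E\equiv 0$, so $\|\dot\Phi\|^2=\beta\,\hat G(\Phi)$ pointwise. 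Substituting this first integral effectively reduces the problem to a first-order gradient-like structure, for which the \L ojasiewicz-Simon framework of Proposition \ref{prop4gen} yields $\dot\Phi\in L^1(0,\infty)$ and hence convergence of $\Phi(t)$ to a single $\overline\Phi\in\mathscr S$.

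For the exponential bound I would rewrite the EL equations as the Pontryagin system $\dot\Phi=\psi$, $\dot\psi=\tfrac{\beta}{2}\nabla\hat G(\Phi)$, with equilibria $(\overline\Phi,0)$, $\overline\Phi\in\mathscr S$. Using the decomposition $G=G_1+G_2$ and the explicit form of each $G_i$, one checks that \eqref{prop_group_eq4} is precisely the condition $\|c^i\|<1$ ensuring that the minimum of $G_i$ is interior (with $\cos\bar\gamma_i=\|c^i\|$ and $\bar\alpha_i$ aligned opposite to $c^i$), at which the Jacobian of the residual with respect to $(\alpha_i,\gamma_i)$ is invertible; consequently the Hessian $H=\nabla^2\hat G(\overline\Phi)$ is positive definite. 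The linearisation at $(\overline\Phi,0)$ then has matrix
\begin{equation*}
\begin{pmatrix} 0 & I \\ \tfrac{\beta}{2}H & 0 \end{pmatrix}
\end{equation*}
with spectrum $\{\pm\sqrt{\beta\lambda/2}:\lambda\in\mathrm{spec}(H)\}$, so the equilibrium is hyperbolic with equal-dimensional stable and unstable manifolds. The Stable Manifold Theorem, as invoked in Proposition \ref{prop_stab_gen_exp}, produces a local stable manifold carrying exponentially decaying trajectories. Since the previous step gives $(\Phi(t),\dot\Phi(t))\to(\overline\Phi,0)$, after finite time the optimal trajectory enters this local stable manifold, yielding the exponential decay of $\|\Phi-\overline\Phi\|+\|\dot\Phi\|$; the bound on $G(\Phi)$ follows from $\hat G(\Phi)\le C\|\Phi-\overline\Phi\|^2$ by Taylor at $\overline\Phi$.

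I expect the main obstacle to be the convergence to a single limit in the general case: \L ojasiewicz-Simon convergence is standard for dissipative gradient flows, but here the EL system is Hamiltonian and the effective dissipation needed to pin down a single limit point is encoded only implicitly, via the integrability of the Lagrangian and the Pontryagin transversality $E\equiv 0$. Making the reduction to a first-order gradient-like structure rigorous, and running the \L ojasiewicz estimate uniformly in a neighbourhood of possibly degenerate components of $\mathscr S$, is the technical crux.
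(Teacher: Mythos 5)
Your treatment of existence (direct method), the Euler--Lagrange equations with bootstrap regularity, the energy identity $E\equiv 0$, and the exponential estimate via positive-definiteness of the Hessian and the Stable Manifold Theorem all match the paper's route (Propositions \ref{prop3_gen}, \ref{prop_EL_no_final_condition}, \ref{prop_stab_gen_exp} and Lemma \ref{lemma_exp_est_gen}); the Barbalat argument for $\dot{\Phi}(T)\to 0$ is a harmless variant of the paper's use of energy conservation. The problem is part (3) in general, and you have correctly located it but not closed it. Your plan is to turn the zero-energy Hamiltonian flow into a ``first-order gradient-like structure'' and invoke a \L ojasiewicz--Simon gradient inequality to get $\dot{\Phi}\in L^1$. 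That reduction is only heuristic: $\|\dot{\Phi}\|^2=\beta\hat{G}(\Phi)$ fixes the speed but not the direction, the system $\ddot{\Phi}=\nabla Q(\Phi)$ is undamped, and Proposition \ref{prop4gen} --- which you cite as supplying the \L ojasiewicz--Simon framework --- is not that kind of result: it assumes $\mathscr{Z}$ \emph{finite} and rests on Lemma \ref{lemma_init_datum_estimate}, whose \L ojasiewicz input is the function-versus-distance inequality $|Q(\Phi)|\geq d\,\mathrm{dist}(\Phi,\mathscr{Z})^{N}$, not a gradient inequality. The paper's actual mechanism is a cost-comparison (turnpike) argument exploiting \emph{global minimality}: the optimal cost is bounded above by that of a straight path to $\mathscr{Z}$ and below, on each window $[t,t+1]$, by a power of $\mathrm{dist}(\Phi(t),\mathscr{Z})$, giving the quantitative decay \eqref{lemma_init_datum_estimate_eq2}; convergence to a single limit then follows from finiteness of the zero set. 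This key lemma is absent from your proposal.

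The gap is concrete in the degenerate case $c^{i}=0$ (no imbalance in the plane $\pi_i$), where by Lemma \ref{lemma_argming} the set $\mathrm{argmin}(Q_i)$ is a whole circle: there your claim that \eqref{prop_group_eq4} forces $\nabla^2\hat{G}(\overline{\Phi})$ to be positive definite fails (the Hessian is degenerate along the continuum of minimizers), and ``running the \L ojasiewicz estimate uniformly near degenerate components'' is exactly what you would need to make rigorous but do not. The paper sidesteps this by an explicit reduction: the split $J=J_1+J_2$ lets one treat each head separately, and when $c^{i}=0$ the Euler--Lagrange equation gives $\ddot{\alpha}_i=0$, which together with $\dot{\alpha}_i\in L^2(0,+\infty)$ forces $\alpha_i\equiv\alpha_{0,i}$; the remaining $\gamma_i$-problem is one-dimensional with a finite, nondegenerate zero set, so Proposition \ref{prop4gen} (and the exponential estimate) applies to it. You need either this reduction or a genuinely new argument for continua of minimizers; as written, part (3) is not proved.
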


In the following subsection, we analyze the corresponding steady problem. In subsection \ref{subsec:2.3}, we develop general tools to prove the above result. In subsection \ref{subsec:2.4}, we prove Proposition \ref{prop_group}. In subsection \ref{subsec:2.5}, we perform some numerical simulations validating the theory. In section \ref{sec:3} we present the feedback strategy.

\subsection{The steady problem}
\label{subsec:2.2}

First of all, we address the \textit{steady} problem:
\begin{itemize}
	\item[]\textit{Find a 4-tuple of angles $(\overline{\alpha}_1,\overline{\gamma}_1;\overline{\alpha}_2,\overline{\gamma}_2)$ such that the imbalance indicator $G$ is minimized}.
\end{itemize}

A solution to the above steady problem is called \textit{steady optimum}. We recall that the set of steady optima is denoted by $\mathscr{S}= \mbox{argmin}\left(G\right)$.

\begin{remark}\label{remark_steadyoptima}
	We observe that by using \eqref{split},
	\begin{equation}
	\mathscr{S}=\mbox{argmin}(G_1)\times \mbox{argmin}(G_2),
	\end{equation}
	namely we can reduce our 4-dimensional problem to a 2-dimensional problem.
\end{remark}

Therefore, we have reduced to find minimizers of a function of the form:
\begin{equation}
g(\alpha,\gamma)\coloneqq \left|\cos(\gamma)\cos(\alpha)-c_1\right|^2+\left|\cos(\gamma)\sin(\alpha)-c_2\right|^2.
\end{equation}
This task is accomplished in Lemma below.

\begin{lemma}\label{lemma_argming}
	Let $c=(c_1,c_2)\in\mathbb{R}^2$. Set
	\begin{equation}\label{g_lemma_argming}
	g(\alpha,\gamma)\coloneqq \left|\cos(\gamma)\cos(\alpha)-c_1\right|^2+\left|\cos(\gamma)\sin(\alpha)-c_2\right|^2.
	\end{equation}
	Let $\mbox{argmin}\left(g\right)$ be the set of minimizers of $g$. Then,
	\begin{enumerate}
		\item if $c=0$, then
		\begin{equation}
		\mbox{argmin}(g)=\left\{\left(\theta,\frac{\pi}{2}\right) \ \bigg| \ \theta\in\mathbb{T}\right\};
		\end{equation}
		\item if $c\neq 0$, set $d\coloneqq \min\left\{1,\|c\|\right\}$. Then,
		\begin{equation}\label{argming}
		\mbox{argmin}(g)=\left(\arg(c_1+ic_2),\arccos(d)\right)\cup
		\end{equation}
		\begin{equation}
		\cup \left(\arg(c_1+ic_2)+\pi,\arccos(-d)\right),
		\end{equation}
		where $\arg(c_1+ic_2)$ denotes the argument of the complex number $c_1+ic_2$.\\
		Moreover, if $c\neq 0$, there exists a unique $(\alpha,\gamma)$ minimizer of $g$, with $ 0\leq \alpha<2\pi$ and $0\leq \gamma\leq \frac{\pi}{2}$;
		\item $\inf g=0$ if and only if $\|c\|\leq 1$;
		\item $\inf g = 
		\begin{dcases}
		0,\hspace{0.6 cm} \hspace{0.3 cm}& \mbox{if} \hspace{0.10 cm}\|c\|\leq 1\\
		\left|\|c\|-1\right|^2,  \hspace{0.3 cm}& \mbox{if} \hspace{0.10 cm}\|c\|> 1.
		\end{dcases}$
	\end{enumerate}
\end{lemma}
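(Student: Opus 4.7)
\medskip

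\noindent\textbf{Proof plan.} The plan is to rewrite $g$ as the squared distance from a variable point on the closed unit disk to the fixed target $c$, thereby reducing the lemma to a trivial nearest-point projection problem, and then to invert the parametrization.

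\emph{Step 1: Geometric reformulation.} I define
\[
u(\alpha,\gamma)\coloneqq\bigl(\cos\gamma\cos\alpha,\cos\gamma\sin\alpha\bigr),
\]
so that $g(\alpha,\gamma)=\|u(\alpha,\gamma)-c\|^{2}$. Since $\|u(\alpha,\gamma)\|=|\cos\gamma|\le 1$, the image of the map $u:\mathbb{T}^{2}\to\mathbb{R}^{2}$ is contained in the closed unit disk $\overline{D}$. Conversely, every point of $\overline{D}$ is attained: writing a point of $\overline{D}$ in polar form $(\rho\cos\theta,\rho\sin\theta)$ with $\rho\in[0,1]$ and $\theta\in[0,2\pi)$, it equals $u(\theta,\arccos\rho)$. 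Hence the image of $u$ is exactly $\overline{D}$, and minimizing $g$ over $\mathbb{T}^{2}$ is the same as minimizing $\|v-c\|^{2}$ over $v\in\overline{D}$ and then pulling back the minimizer through $u$.

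\emph{Step 2: The disk projection problem (items (3) and (4)).} The nearest-point projection of $c\in\mathbb{R}^{2}$ onto $\overline{D}$ is well known. If $\|c\|\le 1$, then $c\in\overline{D}$, the unique nearest point is $c$ itself, and the minimum of $\|v-c\|^{2}$ is $0$. If $\|c\|>1$, the unique nearest point is $c/\|c\|$, and the minimum equals $(\|c\|-1)^{2}$. This yields items (3) and (4) at once, and identifies the target point $v^{*}$ in $\overline{D}$ whose preimage under $u$ I must compute.

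\emph{Step 3: Inverting the parametrization (items (1) and (2)).} When $c=0$ the condition $u(\alpha,\gamma)=0$ amounts to $\cos\gamma=0$, which in the representative range gives $\gamma=\pi/2$ and $\alpha$ arbitrary, yielding item (1). When $c\neq 0$ write $\psi\coloneqq\arg(c_{1}+ic_{2})$, set $d\coloneqq\min\{1,\|c\|\}$, and distinguish $v^{*}=c$ (if $\|c\|\le 1$) versus $v^{*}=c/\|c\|$ (if $\|c\|>1$); in both cases $v^{*}=d(\cos\psi,\sin\psi)$. The system $\cos\gamma\cos\alpha=d\cos\psi$, $\cos\gamma\sin\alpha=d\sin\psi$ has exactly two solutions modulo $2\pi$: either $\cos\gamma=d>0$ with $\alpha=\psi$, giving $\gamma=\arccos(d)$; or $\cos\gamma=-d<0$ with $\alpha=\psi+\pi$, giving $\gamma=\arccos(-d)$. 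These are precisely the two families in \eqref{argming}. For the uniqueness clause, note that $c\neq 0$ forces $d>0$, so $\arccos(d)<\pi/2<\arccos(-d)$; restricting to $0\le\gamma\le\pi/2$ discards the second branch and leaves a single minimizer $(\psi,\arccos d)$ with $0\le\alpha<2\pi$.

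\emph{Main obstacle.} There is no real analytic difficulty once the substitution $v=u(\alpha,\gamma)$ is made; the only care required is the bookkeeping for the two branches of $\cos\gamma$ of opposite sign when inverting the parametrization and the verification that the restriction $\gamma\in[0,\pi/2]$ selects exactly one of them, which is what the proof has to do cleanly to establish item (2).
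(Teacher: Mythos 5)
Your proposal is correct and, in fact, supplies a proof that the paper itself omits: the paper disposes of this lemma with the single sentence ``This Lemma can be proved by trigonometric calculus,'' so there is no argument of record to compare against. Your reduction $g(\alpha,\gamma)=\|u(\alpha,\gamma)-c\|^{2}$ with $u(\alpha,\gamma)=(\cos\gamma\cos\alpha,\cos\gamma\sin\alpha)$, the observation that the image of $u$ is exactly the closed unit disk, and the appeal to the nearest-point projection onto the disk give items (3) and (4) immediately and identify the target point $v^{*}=d(\cos\psi,\sin\psi)$ correctly in both regimes $\|c\|\le 1$ and $\|c\|>1$; the inversion of the parametrization and the selection of the branch $\gamma\in[0,\pi/2]$ for the uniqueness clause are also sound. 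One small imprecision, which you inherit from the lemma's own statement rather than introduce: since $g$ depends on $\gamma$ only through $\cos\gamma$, the equation $\cos\gamma=d$ has the two solutions $\gamma=\pm\arccos(d)$ on $\mathbb{T}$, so the full minimizer set on $\mathbb{T}^{2}$ also contains $(\psi,-\arccos(d))$ and $(\psi+\pi,-\arccos(-d))$ (and, in item (1), the points $(\theta,-\pi/2)$); your claim of ``exactly two solutions modulo $2\pi$'' is literally an undercount unless one identifies $\gamma$ with $-\gamma$, as the paper implicitly does because the two balancing masses at angles $\alpha\pm\gamma$ are unordered. This does not affect items (1), (3), (4) or the uniqueness assertion restricted to $0\le\gamma\le\pi/2$, but it is worth stating the $\gamma\mapsto-\gamma$ symmetry explicitly if you want equation \eqref{argming} to be an exact set equality on $\mathbb{T}^{2}$.
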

This Lemma can be proved by trigonometric calculus.

Now, let $\overline{\Phi}\in\mathscr{S}$ be a minimizer of the imbalance indicator $G$. We highlight that two circumstances may occur:
\begin{itemize}
	\item $\inf G=0$, namely, the overall system made of rotor and balancing masses can be fully balanced, by placing the four balancing masses as
	\begin{eqnarray}\label{steady_config_head1}
	P_{1,1}&=&r_1\left(\cos\left(\overline{\alpha}_1-\overline{\gamma}_1\right),\sin\left(\overline{\alpha}_1-\overline{\gamma}_1\right)\right)\nonumber\\
	P_{1,2}&=&r_1\left(\cos\left(\overline{\alpha}_1+\overline{\gamma}_1\right),\sin\left(\overline{\alpha}_1+\overline{\gamma}_1\right)\right)
	\end{eqnarray}
	and
	\begin{eqnarray}\label{steady_config_head2}
	P_{2,1}&=&r_2\left(\cos\left(\overline{\alpha}_2-\overline{\gamma}_2\right),\sin\left(\overline{\alpha}_2-\overline{\gamma}_2\right)\right)\nonumber\\
	P_{2,2}&=&r_2\left(\cos\left(\overline{\alpha}_2+\overline{\gamma}_2\right),\sin\left(\overline{\alpha}_2+\overline{\gamma}_2\right)\right).
	\end{eqnarray}
	\item $\inf G>0$, i.e. the imbalance of the rotor is too large to be compensated by the available balancing masses. Despite that, $(\overline{\alpha}_1,\overline{\gamma}_1;\overline{\alpha}_2,\overline{\gamma}_2)$  is a minimizer of $G$. Hence, by locating the balancing masses in configuration \eqref{steady_config_head1}-\eqref{steady_config_head2}, we do our best to balance the system, being aware full balance cannot be achieved.
\end{itemize}

In the Proposition below, we illustrate when the circumstance $\inf G=0$ occurs.

\begin{proposition}\label{prop_inf G}
	The imbalance indicator $G$ admits zeros $\left(\inf G=0\right)$ if and only if
	\begin{equation}
	m_1r_1\geq \frac{\sqrt{F_{1,x}^2+F_{1,y}^2}}{2\omega^2}\hspace{0.3 cm}\mbox{and}\hspace{0.3 cm}m_2r_2\geq \frac{\sqrt{F_{2,x}^2+F_{2,y}^2}}{2\omega^2}.
	\end{equation}
\end{proposition}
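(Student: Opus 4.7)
The plan is to reduce the problem to Lemma \ref{lemma_argming} by exploiting the additive decomposition of $G$ across the two balancing planes. First I would invoke \eqref{split}, which writes $G = G_1 + G_2$ with $G_1, G_2 \geq 0$. Since both summands are non-negative, one has $\inf G = \inf G_1 + \inf G_2$, so $\inf G = 0$ if and only if $\inf G_1 = 0$ and $\inf G_2 = 0$ separately. This reduces the four-dimensional problem to two independent two-dimensional problems, one per balancing head.

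Next, for each $i=1,2$, I would factor out the multiplicative constant $(2 m_i r_i \omega^2)^2$ from $G_i$ to obtain
\begin{equation*}
G_i(\alpha_i,\gamma_i) = (2 m_i r_i \omega^2)^2 \,\tilde g_i(\alpha_i,\gamma_i),
\end{equation*}
where $\tilde g_i$ has exactly the form \eqref{g_lemma_argming} of Lemma \ref{lemma_argming} with parameter vector $c = -c^i = -\frac{1}{2 m_i r_i \omega^2}(F_{i,x},F_{i,y})$ (the sign is immaterial, since the substitution $\alpha_i \mapsto \alpha_i + \pi$ changes $c$ to $-c$ while leaving $\tilde g_i$'s infimum and $\|c\|$ unchanged).

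Third, I would apply part (3) of Lemma \ref{lemma_argming}: $\inf \tilde g_i = 0$ if and only if $\|c^i\| \leq 1$. Unpacking this inequality gives
\begin{equation*}
\frac{\sqrt{F_{i,x}^2 + F_{i,y}^2}}{2 m_i r_i \omega^2} \leq 1,
\end{equation*}
which is equivalent to $m_i r_i \geq \frac{\sqrt{F_{i,x}^2 + F_{i,y}^2}}{2\omega^2}$. Combining the two cases $i=1,2$ yields the stated characterization. There is no real obstacle: the proof is a direct bookkeeping exercise once Lemma \ref{lemma_argming} is in hand, and the physical interpretation is simply that each head must be able to generate a centrifugal force of magnitude at least $\sqrt{F_{i,x}^2 + F_{i,y}^2}$ at the given angular velocity.
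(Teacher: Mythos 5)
Your proposal is correct and follows essentially the same route as the paper: the paper likewise reduces to the two decoupled heads and invokes Lemma \ref{lemma_argming}(3) to characterize when the per-head infimum vanishes, which unpacks to $m_ir_i\geq \sqrt{F_{i,x}^2+F_{i,y}^2}/(2\omega^2)$. Your explicit handling of the sign discrepancy between $G_i$ (which has $+F_{i,x}$) and the lemma's $g$ (which has $-c_1$) via $\alpha_i\mapsto\alpha_i+\pi$ is a minor bookkeeping detail the paper passes over silently, but it does not change the argument.
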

\begin{proof}[Proof of Proposition \ref{prop_inf G}]
	We have $G(\alpha_1,\gamma_1,\alpha_2,\gamma_2)=0$ if and only if
	\begin{equation}
	\begin{dcases}
	2m_1r_1\omega^2\cos(\gamma_1)\cos(\alpha_1)&=F_{1,x}\\
	2m_1r_1\omega^2\cos(\gamma_1)\sin(\alpha_1)&=F_{1,y}\\
	\end{dcases}
	\end{equation}
	\begin{equation}
	\begin{dcases}
	2m_2r_2\omega^2\cos(\gamma_2)\cos(\alpha_2)&=F_{2,x}\\
	2m_2r_2\omega^2\cos(\gamma_2)\sin(\alpha_2)&=F_{2,y}.
	\end{dcases}
	\end{equation}
	Note that the first two equations are decoupled with respect to the second ones. By Lemma \ref{lemma_argming} (3), the above system admits a solution if and only if
	\begin{equation}
	\begin{dcases}
	m_1r_1&\geq \frac{\sqrt{F_{1,x}^2+F_{1,y}^2}}{2\omega^2}\\
	m_2r_2&\geq \frac{\sqrt{F_{2,x}^2+F_{2,y}^2}}{2\omega^2},
	\end{dcases}
	\end{equation}
	as required.
\end{proof}

As we have seen at the beginning of section \ref{sec:2}, an initial configuration $\Phi_0=(\alpha_{0,1},\gamma_{0,1};\alpha_{0,2},\gamma_{0,2})$ of the balancing masses is given. A key issue is to determine a trajectory $\Phi(t)=(\alpha_{0,1}(t),\gamma_{0,1}(t);\alpha_{0,2}(t),\gamma_{0,2}(t))$ joining the initial configuration $\Phi_0$ with a steady optimum $\overline{\Phi}\in\mathscr{S}$ minimizing the imbalance in the meanwhile. For this reason, the \textit{dynamical} control problem has to be addressed. Our main result Proposition \ref{prop_group} asserts the \textit{steady} problem and the \textit{dynamical} one are interlinked.

\subsection{General results}
\label{subsec:2.3}
The purpose of this section is to provide some general tools to prove Proposition \ref{prop_group}. We introduce a generalized version of our functional \eqref{functional}.

Consider the Lagrangian $L:\mathbb{T}^n\times \mathbb{R}^n\longrightarrow \mathbb{R}$
\begin{equation}
L\left(\Phi,\psi\right)\coloneqq \frac{1}{2}\|\psi\|^2+Q(\Phi),
\end{equation}
where $Q:\mathbb{T}^n\longrightarrow \mathbb{R}^+$ is real analytic.

Let $\Phi_0\in \mathbb{T}^n$ be an initial condition. Set the space of admissible trajectories
\begin{equation}\label{adm_traj}
\mathscr{A}\coloneqq \left\{\Phi\in H^1_{loc}([0,+\infty);\mathbb{T}^n) \hspace{0.3 cm} \big| \hspace{0.3 cm} \Phi(0)=\Phi_0\hspace{0.3 cm}\mbox{and}\hspace{0.3 cm}L(\Phi,\dot{\Phi})\in L^1(0,+\infty)\right\}.
\end{equation}
The zero set of $Q$ is denoted by $\mathscr{Z}$.

Our goal is to minimize the functional $
K:\mathscr{A} \longrightarrow \mathbb{R}$
\begin{equation}\label{functional_gen}
K\left(\Phi\right) \coloneqq\int_0^{\infty} \frac{1}{2}\|\dot{\Phi}\|^2+Q(\Phi) \ dt.
\end{equation}

\begin{remark}\label{remark1_gen}
	If $\mathscr{Z}\neq \varnothing$, then the space of admissible trajectories $\mathscr{A}$ is nonempty.
\end{remark}
\begin{proof}
	Take $\overline{\Phi}\in \mathscr{Z}$. Consider the trajectory
	\begin{equation}\Phi(t)\coloneqq
	\begin{dcases}
	(1-t)\Phi_0+t\overline{\Phi} \hspace{0.3 cm} &t\in [0,1)\\
	\overline{\Phi} \hspace{0.3 cm} &t\in [1,+\infty).\\
	\end{dcases}
	\end{equation}
	Now, $\Phi\in \mathscr{A}$, thus showing that $\mathscr{A}\neq \varnothing$.
\end{proof}

In Proposition \ref{prop3_gen}, we are concerned with the existence of minimizer of \eqref{functional_gen}. The proof can be found in the Appendix.

\begin{proposition}\label{prop3_gen}
	There exists $\Phi\in \mathscr{A}$ global minimizer of \eqref{functional_gen}.
\end{proposition}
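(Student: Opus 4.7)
The natural approach is the direct method of the Calculus of Variations, exploiting three structural features: $Q \geq 0$ so the Lagrangian is coercive in $\dot{\Phi}$; the configuration space $\mathbb{T}^n$ is compact, which prevents the trajectories from running off to infinity in the state variable; and $Q$ is continuous on a compact set. The twist compared with the textbook finite-horizon case is that the time interval is $(0,+\infty)$, so a diagonal extraction together with a monotone passage to the limit $T \to +\infty$ is needed. Note that if $\mathscr{A} = \varnothing$ the statement is vacuous, so we assume $\mathscr{A} \neq \varnothing$ (this is automatic when $\mathscr{Z} \neq \varnothing$ by Remark \ref{remark1_gen}, but the proof below does not use this).

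The plan is to pick a minimizing sequence $\{\Phi_n\} \subset \mathscr{A}$ with $K(\Phi_n) \to m := \inf_{\mathscr{A}} K \in [0,+\infty)$, and extract a convergent subsequence. Since $Q \geq 0$, the bound $\|\dot{\Phi}_n\|_{L^2(0,+\infty;\mathbb{R}^n)}^2 \leq 2K(\Phi_n)$ is uniform in $n$. Lifting each $\Phi_n$ to $\mathbb{R}^n$ with $\Phi_n(0)=\Phi_0$ and writing $\Phi_n(t) = \Phi_0 + \int_0^t \dot{\Phi}_n(s)\,ds$, the sequence is bounded in $H^1((0,T);\mathbb{R}^n)$ for every fixed $T>0$. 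By Banach--Alaoglu and the compact embedding $H^1(0,T) \hookrightarrow C([0,T])$, a subsequence converges weakly in $H^1(0,T)$ and uniformly on $[0,T]$. A diagonal argument over $T = 1,2,\ldots$ yields a single subsequence, still denoted $\{\Phi_n\}$, and a function $\Phi \in H^1_{loc}([0,+\infty);\mathbb{R}^n)$ with $\Phi_n \rightharpoonup \Phi$ in $H^1(0,T)$ and $\Phi_n \to \Phi$ uniformly on $[0,T]$ for every $T > 0$. In particular $\Phi(0) = \Phi_0$.

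Next one performs lower-semicontinuity on each slab $[0,T]$. The kinetic part is convex in the derivative, and weak convergence $\dot{\Phi}_n \rightharpoonup \dot{\Phi}$ in $L^2(0,T;\mathbb{R}^n)$ gives
$$\int_0^T \tfrac{1}{2}\|\dot{\Phi}\|^2\,dt \leq \liminf_{n\to\infty} \int_0^T \tfrac{1}{2}\|\dot{\Phi}_n\|^2\,dt.$$
Because $Q$ is continuous on the compact torus $\mathbb{T}^n$ and $\Phi_n \to \Phi$ uniformly on $[0,T]$, we also have $\int_0^T Q(\Phi_n)\,dt \to \int_0^T Q(\Phi)\,dt$. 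Nonnegativity of the integrand then yields, for every $T>0$,
$$\int_0^T L(\Phi,\dot{\Phi})\,dt \leq \liminf_{n\to\infty} \int_0^T L(\Phi_n,\dot{\Phi}_n)\,dt \leq \liminf_{n\to\infty} K(\Phi_n) = m.$$
Letting $T \to +\infty$ and using monotone convergence (the integrand is nonnegative), one obtains $K(\Phi) \leq m$. In particular $L(\Phi,\dot{\Phi}) \in L^1(0,+\infty)$, so $\Phi \in \mathscr{A}$, and $\Phi$ realizes the infimum.

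The only delicate point is the simultaneous use of weak convergence on each finite slab and the passage $T \to +\infty$: the diagonal extraction guarantees a common subsequence, while nonnegativity of the Lagrangian makes monotone convergence legitimate for the limit in $T$. Everything else — coercivity of the kinetic term, compactness of $\mathbb{T}^n$, and continuity of $Q$ — is standard and provides the compactness and lower semicontinuity needed to close the argument.
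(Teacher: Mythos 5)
Your proposal is correct and follows essentially the same route as the paper: the direct method with a minimizing sequence, the uniform $L^2$ bound on $\dot{\Phi}_n$ from nonnegativity of $Q$, weak $H^1$ compactness on each finite interval $[0,T]$, strong (pointwise/uniform) convergence to pass to the limit in the $\int_0^T Q(\Phi_n)\,dt$ term, weak lower semicontinuity for the kinetic term, and finally letting $T\to+\infty$ using nonnegativity of the integrand. The only differences are cosmetic — you make the diagonal extraction explicit and invoke the compact embedding $H^1(0,T)\hookrightarrow C([0,T])$ where the paper uses continuity of the evaluation functionals together with dominated convergence — and you are slightly more careful than the paper in flagging the (harmless) case $\mathscr{A}=\varnothing$.
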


We now derive to optimality conditions for \eqref{functional_gen}. Let $\Phi\in \mathscr{A}$ be an admissible trajectory. We consider directions $v\in C^{\infty}_c((0,+\infty);\mathbb{R}^n)$. We can compute the directional derivative of $K$ at $\Phi$ along the direction $v$, obtaining
\begin{equation}\label{differential_gen}
\langle dK(\Phi),v\rangle=\int_0^{\infty} \dot{\Phi} \dot{v}+\nabla Q(\Phi) vdt.
\end{equation}

From the above computation of the directional derivative and Fermat's theorem, we derive the first order Optimality Conditions.
\begin{proposition}\label{prop_EL_no_final_condition}
	Take $\Phi$ minimizer of \eqref{functional_gen}. Then, we have:
	\begin{enumerate}
		\item $\Phi\in C^{\infty}([0,+\infty);\mathbb{T}^n)$;
		\item the Euler-Lagrange equations are satisfied
		\begin{equation}\label{EL_no_final_condition}
		\begin{dcases}
		\ddot{\Phi}=\nabla Q\left(\Phi\right)\hspace{0.6 cm}  & \hspace{0.10 cm}t\in (0,+\infty)\\
		\Phi(0)=\Phi_0;
		\end{dcases}
		\end{equation}
		\item the energy is conserved, i.e.
		\begin{equation}\label{Energy_conservation}
		E(t)\coloneqq \frac12\|\dot{\Phi}(t)\|^2-Q\left(\Phi(t)\right)\equiv 0.
		\end{equation}
	\end{enumerate}
\end{proposition}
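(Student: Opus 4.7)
The plan is to carry out the standard program of the calculus of variations, adapted to the half-line horizon.

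First, I would derive the Euler--Lagrange equation in a weak form. Because $\Phi$ is a minimizer and variations of the form $\Phi+\varepsilon v$ with $v\in C^\infty_c((0,+\infty);\mathbb{R}^n)$ lie in $\mathscr{A}$ (they preserve the initial datum, and $L(\Phi+\varepsilon v,\dot\Phi+\varepsilon\dot v)\in L^1$ by boundedness of $Q$ and $\nabla Q$ on the compact torus together with $v$ having compact support), Fermat's lemma applied to $\varepsilon\mapsto K(\Phi+\varepsilon v)$ combined with the first-variation formula \eqref{differential_gen} gives
\[
\int_0^\infty \dot\Phi\cdot\dot v + \nabla Q(\Phi)\cdot v\, dt = 0 \qquad \forall\, v\in C^\infty_c((0,+\infty);\mathbb{R}^n).
\]
This is precisely the distributional identity $\ddot\Phi=\nabla Q(\Phi)$ on $(0,+\infty)$.

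Second, I would bootstrap the regularity. The one-dimensional Sobolev embedding shows that $\Phi\in H^1_{loc}$ is continuous, hence $\nabla Q(\Phi)$ is continuous since $Q$ is real analytic on $\mathbb{T}^n$. The distributional equation therefore upgrades to $\ddot\Phi\in C^0$, so $\Phi\in C^2$. Iterating, $\Phi\in C^k$ yields $\nabla Q(\Phi)\in C^k$ and thus $\Phi\in C^{k+2}$, so $\Phi\in C^\infty([0,+\infty);\mathbb{T}^n)$. Smoothness at the endpoint $t=0$ is obtained by interpreting the identity as a second-order ODE on $[0,+\infty)$ with initial condition $\Phi(0)=\Phi_0$ (inherited from $\mathscr{A}$); standard ODE regularity does the rest.

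Third, the energy identity. With $\Phi$ now smooth, differentiating $E(t)=\tfrac12\|\dot\Phi(t)\|^2-Q(\Phi(t))$ and substituting the Euler--Lagrange equation gives
\[
\dot E(t)=\dot\Phi\cdot\ddot\Phi-\nabla Q(\Phi)\cdot\dot\Phi=0,
\]
so $E\equiv E_0$ is constant. The delicate step, specific to the infinite horizon, is identifying $E_0$ with $0$. Here I would exploit the integrability $\tfrac12\|\dot\Phi\|^2+Q(\Phi)\in L^1(0,+\infty)$ built into the admissibility class $\mathscr{A}$. By the mean value theorem for integrals, for each integer $n$ there exists $t_n\in[n,n+1]$ with $\tfrac12\|\dot\Phi(t_n)\|^2+Q(\Phi(t_n))=\int_n^{n+1}[\tfrac12\|\dot\Phi\|^2+Q(\Phi)]\,dt$, and the right-hand side tends to $0$ as $n\to\infty$. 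Both summands are non-negative, so each vanishes along $t_n$, and therefore $E(t_n)=\tfrac12\|\dot\Phi(t_n)\|^2-Q(\Phi(t_n))\to 0$. Since $E$ is constant this forces $E_0=0$.

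The main obstacle is the last point: the conservation of energy by itself is routine, but pinning the constant to zero is a genuine infinite-horizon transversality condition and uses essentially that $\Phi$ is admissible, not merely that it is a critical point. An equivalent approach would be via the inner variation $\Phi_\epsilon(s)=\Phi(s/(1+\epsilon))$, whose first-order optimality condition reads $\int_0^\infty\tfrac12\|\dot\Phi\|^2\,dt=\int_0^\infty Q(\Phi)\,dt$; combined with the constancy of $E$, this again yields $E\equiv 0$.
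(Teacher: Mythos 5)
Your proposal is correct and follows essentially the same route as the paper: Fermat's theorem plus the first-variation formula gives the weak Euler--Lagrange equation, regularity is obtained by bootstrapping, energy conservation follows by differentiation, and the constant is pinned to zero by extracting a sequence $t_q\to+\infty$ along which both $\|\dot\Phi(t_q)\|^2$ and $Q(\Phi(t_q))$ vanish, using the integrability of the Lagrangian built into $\mathscr{A}$. Your mean-value-theorem phrasing of that last extraction, and the remark on inner variations, are slightly more explicit than the paper's but not substantively different.
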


Now, in the spirit of stabilization-turnpike theory (see \cite{porretta2013long,trelat2015turnpike,TGS}), we show that the time-evolution optima converges as $t\to \infty$ to steady optima. As a byproduct, this will allows us to add to \eqref{EL_no_final_condition} the final condition $\dot{\Phi}(t)\underset{t\to +\infty}{\longrightarrow}0$. We start by proving the following Lemma.

\begin{lemma}\label{lemma_init_datum_estimate}
	Assume $\mathscr{Z}\neq \varnothing$ and $Q$ is real analytic. Let $L\coloneqq\max_{\mathbb{T}^n}\left\|\nabla Q\right\|$ be its Lipschitz constant. Let $d>0$ and $N>0$ be the constants appearing in the \L ojasiewicz inequality (see, e.g. \cite[Th\'eor\`eme 2 page 62]{lojasiewiczensembles}),
	\begin{equation}\label{Lojasiewicz}
	\left|Q(\Phi)\right|\geq d \hspace{0.03 cm} \mbox{dist}(\Phi,\mathscr{Z})^{N}, \hspace{0.3 cm}\forall \ \Phi\in \mathbb{T}^n.
	\end{equation}
	Let $\Phi_0\in \mathbb{T}^n$ be an initial condition. Consider $\Phi\in \mathscr{A}$ global minimizer of \eqref{functional_gen}. Then, for any $t>0$, we have
	\begin{equation}\label{lemma_init_datum_estimate_eq1}
		\mbox{dist}(\Phi(t),\mathscr{Z})\leq \sqrt[\tilde{N}]{\sigma_1\mbox{dist}(\Phi_0,\mathscr{Z})},
	\end{equation}
	and
	\begin{equation}\label{lemma_init_datum_estimate_eq2}
		\mbox{dist}(\Phi(t),\mathscr{Z})\leq \sqrt[N\tilde{N}]{\sigma_2\frac{\mbox{dist}(\Phi_0,\mathscr{Z})}{t}},
	\end{equation}
	where $\tilde{N}\coloneqq \max\left\{2,N\right\}$,
	\begin{equation}
		\sigma_1 \coloneqq \frac{\left(2\pi\sqrt{n}+L\right)}{2}\max\left\{2^{\tilde{N}+1}\left(2\pi\sqrt{n}\right)^{\tilde{N}-2},\frac{2^{\tilde{N}}\left(2\pi\sqrt{n}\right)^{\tilde{N}-N}}{d}\right\}
	\end{equation}
	and
	\begin{equation}
		\sigma_2 \coloneqq \frac{\left(2\pi\sqrt{n}+L\right)\sigma_1^{N}}{2d}.
	\end{equation}
\end{lemma}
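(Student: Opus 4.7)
The plan is to combine a competitor construction with the \L ojasiewicz inequality and Cauchy--Schwarz. Write $f(t)\coloneqq \mbox{dist}(\Phi(t),\mathscr{Z})$, set $D\coloneqq 2\pi\sqrt{n}$ as an upper bound for the diameter of $\mathbb{T}^n$ (so that $f(t)\leq D$ always), and denote by $g(s)\coloneqq \int_s^{\infty}L(\Phi,\dot{\Phi})\,dt$ the remaining action. For any $s\geq 0$ I would pick $\bar{\Phi}_s\in\mathscr{Z}$ realising $\|\Phi(s)-\bar{\Phi}_s\|=f(s)$ and compare $\Phi$ with the admissible competitor $\tilde{\Phi}$ which equals $\Phi$ on $[0,s]$, interpolates linearly from $\Phi(s)$ to $\bar{\Phi}_s$ on $[s,s+1]$, and stays at $\bar{\Phi}_s$ for $t\geq s+1$. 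Using $Q(\bar{\Phi}_s)=0$ together with the Lipschitz bound $Q(\tilde{\Phi}(t))\leq L\|\tilde{\Phi}(t)-\bar{\Phi}_s\|$, the optimality of $\Phi$ yields $g(s)\leq \tfrac{f(s)^2}{2}+\tfrac{L f(s)}{2}\leq \tfrac{(D+L)}{2}f(s)$; in particular $g(0)\leq \tfrac{(D+L)}{2}f(0)$.

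The next step turns this integrated information into a pointwise bound. Fix $t\geq 0$ and choose $s\geq 0$ with $s\leq t\leq s+1$. Since $\int_s^{s+1}Q(\Phi)\leq g(s)\leq g(0)$, the mean value theorem produces $t^*\in[s,s+1]$ with $Q(\Phi(t^*))\leq g(0)$, and \eqref{Lojasiewicz} gives $f(t^*)\leq (g(0)/d)^{1/N}$. The Cauchy--Schwarz estimate $\|\Phi(t)-\Phi(t^*)\|\leq \sqrt{|t-t^*|}\,\|\dot{\Phi}\|_{L^2(s,s+1)}\leq \sqrt{2g(0)}$ and the reverse triangle inequality for $f$ then yield the uniform bound $f(t)\leq (g(0)/d)^{1/N}+\sqrt{2g(0)}$. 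Splitting into the two regimes $f(t)\leq 2\sqrt{2g(0)}$ and $f(t)\leq 2(g(0)/d)^{1/N}$, using $f(t)\leq D$ to absorb the excess factors as $f(t)^{\tilde{N}-2}\leq D^{\tilde{N}-2}$ and $f(t)^{\tilde{N}-N}\leq D^{\tilde{N}-N}$ (both exponents being non-negative by definition of $\tilde{N}$), and finally substituting $g(0)\leq \tfrac{(D+L)}{2}f(0)$ delivers the desired inequality $f(t)^{\tilde{N}}\leq \sigma_1 f(0)$ of \eqref{lemma_init_datum_estimate_eq1}.

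For the decay bound \eqref{lemma_init_datum_estimate_eq2}, the crucial observation is that the first estimate can be \emph{re-applied} after an arbitrary time shift: by Bellman's principle of optimality, $\Phi|_{[\tau,\infty)}$ is itself a minimiser of the analogous problem with initial condition $\Phi(\tau)$, hence \eqref{lemma_init_datum_estimate_eq1} yields $f(t)^{\tilde{N}}\leq \sigma_1 f(\tau)$ for every $0\leq \tau\leq t$. To exploit this I would invoke \L ojasiewicz in integrated form, $d\int_0^t f(r)^N\,dr\leq \int_0^t Q(\Phi)\,dr\leq g(0)\leq \tfrac{(D+L)f(0)}{2}$, and apply the mean value theorem to produce $\tau^*\in[0,t]$ with $f(\tau^*)^N\leq \tfrac{(D+L)f(0)}{2dt}$. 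Substituting $\tau=\tau^*$ into the shifted first bound and raising to the $N$-th power then gives $f(t)^{N\tilde{N}}\leq \sigma_1^N\tfrac{(D+L)f(0)}{2dt}=\sigma_2\tfrac{f(0)}{t}$, which is exactly \eqref{lemma_init_datum_estimate_eq2}.

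The principal obstacle is the second step: upgrading the $L^1$-type datum $g(0)\leq \tfrac{(D+L)}{2}f(0)$ to a genuinely \emph{pointwise} estimate on $f(t)$. \L ojasiewicz alone controls only the integral $\int f^N$, while Cauchy--Schwarz alone controls only the oscillations of $\Phi$; the unit-window interpolation on $[s,s+1]$ is what fuses these two pieces of information into a pointwise statement. The appearance of $\tilde{N}=\max\{2,N\}$ reflects exactly this balance -- the two natural exponents, $1/N$ from \L ojasiewicz and $1/2$ from Cauchy--Schwarz, must both be absorbed into a single power of $f(0)$, and the worse of the two dictates $\tilde{N}$.
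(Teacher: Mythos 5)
Your proposal is correct and follows essentially the same route as the paper's proof: a linear-interpolation competitor plus the Lipschitz bound on $Q$ gives the upper bound $\tfrac{2\pi\sqrt n+L}{2}\,\mathrm{dist}(\Phi_0,\mathscr{Z})$ on the optimal cost; a unit time window combined with Cauchy--Schwarz on $\dot\Phi$ and the \L ojasiewicz inequality converts this into the pointwise bound \eqref{lemma_init_datum_estimate_eq1}; and integrating \L ojasiewicz over $[0,t]$ to locate a good time $\tau^*$ and re-applying the first estimate from there yields \eqref{lemma_init_datum_estimate_eq2}. The only deviations are cosmetic (your window is $[s,s+1]\ni t$ rather than $[t,t+1]$, and you make the case split and the dynamic programming step explicit), and your constants are consistent with $\sigma_1$ and $\sigma_2$.
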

One of the consequences of \eqref{lemma_init_datum_estimate_eq2} is the validity of stabilization/turnpike problem for our control problem \eqref{functional_gen}. Indeed, the right hand-side of \eqref{lemma_init_datum_estimate_eq2} decays to zero as $1/t^{N\tilde{N}}$. For this polynomial decay, no assumption are required on the hessian of $Q$.
\begin{proof}[Proof of Lemma \ref{lemma_init_datum_estimate}]
	\textit{Step 1} \ \textbf{Upper bound of $K\left(\Phi\right)=\inf_{\mathscr{A}}K$}\\
	Since $\mathbb{T}^n$ is compact and $Q$ is continuous, the zero set $\mathscr{Z}$ is compact as well, whence by Weierstrass Theorem, there exists $\overline{\Phi}_0\in \mathscr{Z}$, such that $\left\|\Phi_0-\overline{\Phi}_0\right\|=\mbox{dist}(\Phi_0,\mathscr{Z})$. Consider the trajectory
	\begin{equation}\widehat{\Phi}(t)\coloneqq
	\begin{dcases}
	(1-t)\Phi_0+t\overline{\Phi}_0 \hspace{0.3 cm} &t\in [0,1)\\
	\overline{\Phi}_0 \hspace{0.3 cm} &t\in [1,+\infty).\\
	\end{dcases}
	\end{equation}
	Now, on the one hand, for any $t\in [0,1]$
	\begin{eqnarray*}
	Q\left(\widehat{\Phi}(t)\right)&=&\left|Q\left(\widehat{\Phi}(t)\right)-Q\left(\overline{\Phi}_0\right)\right|\nonumber\\
	&\leq &L\left\|(1-t)\Phi_0+t\overline{\Phi}_0-\overline{\Phi}_0\right\|\nonumber\\
	&=&L\left|1-t\right|\left\|\Phi_0-\overline{\Phi}_0\right\|\nonumber\\
	&=&L\left|1-t\right|\mbox{dist}(\Phi_0,\mathscr{Z}).
	\end{eqnarray*}
	On the other hand, for any $t>1$, $\widehat{\Phi}(t)=\overline{\Phi}_0$, whence
	\begin{equation}
		Q\left(\widehat{\Phi}(t)\right) = 0.
	\end{equation}
	Hence,
	\begin{equation}
	\int_0^{+\infty}Q\left(\widehat{\Phi}(t)\right) \ dt\leq \int_0^{1}L\left|1-t\right|\mbox{dist}(\Phi_0,\mathscr{Z}) \ dt=\frac{L}{2}\mbox{dist}(\Phi_0,\mathscr{Z}).
	\end{equation}
	Therefore,
	\begin{align}\label{lemma_init_datum_upper_bound}
		K\left(\Phi\right)&\leq K\left(\widehat{\Phi}\right)\leq \frac12 \left\|\overline{\Phi}_0-\Phi_0\right\|^2+\frac{L}{2}\mbox{dist}(\Phi_0,\mathscr{Z})\nonumber\\
		&\leq\frac{2\pi\sqrt{n}+L}{2}\mbox{dist}(\Phi_0,\mathscr{Z}).\nonumber\\
	\end{align}
	\textit{Step 2} \ \textbf{Lower bound of $K\left(\Phi\right)=\inf_{\mathscr{A}}K$}\\
	Arbitrarily fix $t\geq 0$. We are going to bound $K\left(\Phi\right)$ from below in terms of $\mbox{dist}(\Phi\left(t\right),\mathscr{Z})$, the distance of $\Phi\left(t\right)$ from the zero set $\mathscr{Z}$. Let $s_t\in \mbox{argmin}_{[t,t+1]}Q(\Phi(\cdot))$ and set $\Phi_1\coloneqq \Phi\left(s_t\right)$.
	\begin{align}
		K\left(\Phi\right)&=\int_0^{\infty} \frac{1}{2}\|\dot{\Phi}\|^2+Q(\Phi) \ ds\nonumber\\
		&\geq\int_{t}^{t+1} \frac{1}{2}\|\dot{\Phi}\|^2+Q(\Phi) \ ds\nonumber\\
		&\geq\int_{t}^{t+1} \frac{1}{2}\|\dot{\Phi}\|^2+\min_{\xi\in [t,t+1]} Q\left(\Phi(\xi)\right) \ ds\nonumber\\
		&=\int_{t}^{t+1} \frac{1}{2}\|\dot{\Phi}\|^2+Q(\Phi_1) \ ds\nonumber\\
		&=\int_{t}^{t+1} \frac{1}{2}\|\dot{\Phi}\|^2 \ ds + Q(\Phi_1)\nonumber\\
		&\geq\int_{t}^{s_t} \frac{1}{2}\|\dot{\Phi}\|^2 \ ds + Q(\Phi_1)\nonumber\\
		&\geq\inf_{\Gamma\in \mathscr{A}_{\Phi(t),\Phi_1}}\int_{t}^{s_t} \frac{1}{2}\|\dot{\Gamma}\|^2 \ ds + Q(\Phi_1)\label{lemma_init_datum_estimate_eq_38_3}\\
		&\geq\frac{\left\|\Phi_1-\Phi(t)\right\|^2}{2\left(s_t-t\right)} + Q(\Phi_1)\nonumber\\
		&\geq\frac{\left\|\Phi_1-\Phi(t)\right\|^2}{2} + Q(\Phi_1)\label{lemma_init_datum_estimate_eq_38_6},\\
	\end{align}
	where \eqref{lemma_init_datum_estimate_eq_38_6} is justified by $\left|s_t-t\right|\leq 1$ and in \eqref{lemma_init_datum_estimate_eq_38_3} we minimize over the space of trajectories $\Gamma$ linking $\Phi(t)$ and $\Phi_1$ in time $s_t-t$
	\begin{equation}
	\mathscr{A}_{\Phi(t),\Phi_1}\coloneqq \left\{\Gamma\in H^1\left(\left(t,s_t\right);\mathbb{T}^n\right) \hspace{0.3 cm} \big| \hspace{0.3 cm} \Gamma(t)=\Phi(t)\hspace{0.3 cm}\mbox{and}\hspace{0.3 cm}\Gamma\left(s_t\right)=\Phi_1\right\}.
	\end{equation}
	We now employ the above inequality combined with \eqref{Lojasiewicz}, getting
	\begin{eqnarray}\label{lemma_init_datum_estimate_eq_49}
		K\left(\Phi\right)&\geq&\frac{\left\|\Phi_1-\Phi(t)\right\|^2}{2} + Q(\Phi_1)\nonumber\\
		&\geq&\frac{\left\|\Phi_1-\Phi(t)\right\|^2}{2}+d \hspace{0.03 cm} \mbox{dist}(Q(\Phi_1),\mathscr{Z})^{N}\nonumber\\
		&=&\frac{\left\|\Phi_1-\Phi(t)\right\|^2}{2}+d \hspace{0.03 cm} \left\|\overline{\Phi}_1-\Phi_1\right\|^{N}\nonumber\\
		&\geq&\frac{\left\|\Phi_1-\Phi(t)\right\|^{\tilde{N}}}{2\left(2\pi\sqrt{n}\right)^{\tilde{N}-2}}+\frac{d}{\left(2\pi\sqrt{n}\right)^{\tilde{N}-N}}  \left\|\overline{\Phi}_1-\Phi_1\right\|^{\tilde{N}}\nonumber\\
		&\geq&\frac{1}{\max\left\{2\left(2\pi\sqrt{n}\right)^{\tilde{N}-2},\frac{\left(2\pi\sqrt{n}\right)^{\tilde{N}-N}}{d} \right\}}\left[\left\|\Phi_1-\Phi(t)\right\|^{\tilde{N}}+\left\|\overline{\Phi}_1-\Phi_1\right\|^{\tilde{N}}\right]\nonumber\\
		&\geq&\zeta^{-1}\left[\left\|\Phi_1-\Phi(t)\right\|+\left\|\overline{\Phi}_1-\Phi_1\right\|\right]^{\tilde{N}}\nonumber\\
		&\geq&\zeta^{-1}\left\|\Phi(t)-\overline{\Phi}_1\right\|^{\tilde{N}}\geq\zeta^{-1}\mbox{dist}(\Phi(t),\mathscr{Z})^{\tilde{N}},
	\end{eqnarray}
	with $\overline{\Phi}_1\in \mbox{argmin}_{\mathscr{Z}}\left\|\cdot-\Phi_1\right\|$, $\tilde{N}= \max\left\{2,N\right\}$ and
	\begin{equation}
	\zeta \coloneqq \max\left\{2^{\tilde{N}+1}\left(2\pi\sqrt{n}\right)^{\tilde{N}-2},\frac{2^{\tilde{N}}\left(2\pi\sqrt{n}\right)^{\tilde{N}-N}}{d}\right\}.
	\end{equation}
	
	\textit{Step 3} \ \textbf{Proof of \eqref{lemma_init_datum_estimate_eq1}}\\
	By using \eqref{lemma_init_datum_upper_bound} and \eqref{lemma_init_datum_estimate_eq_49}, we obtain
	\begin{eqnarray}\label{lemma_init_datum_estimate_eq_52}
		\frac{2\pi\sqrt{n}+L}{2}\mbox{dist}(\Phi_0,\mathscr{Z})&\geq&K\left(\Phi\right)\nonumber\\
		&\geq&\zeta^{-1}\mbox{dist}(\Phi(t),\mathscr{Z})^{\tilde{N}},
	\end{eqnarray}
	as required. We now aim at proving \eqref{lemma_init_datum_estimate_eq2}.
	
	\textit{Step 4} \ \textbf{Bound of $\min_{\tau\in [0,t]}\mbox{dist}(\Phi(\tau),\mathscr{Z})$}\\
	Arbitrarily fix $t>0$. We have
	\begin{align}
	K\left(\Phi\right)&=\int_0^{\infty} \frac{1}{2}\|\dot{\Phi}\|^2+Q(\Phi) \ ds\nonumber\\
	&\geq\int_{0}^{t} Q(\Phi) \ ds\nonumber\\
	&\geq\int_{0}^{t} d \hspace{0.03 cm} \mbox{dist}(\Phi,\mathscr{Z})^{N} \ ds\nonumber\\
	&\geq d\int_{0}^{t} \min_{\tau\in [0,t]}\mbox{dist}(\Phi(\tau),\mathscr{Z})^{N} \ ds\nonumber\\
	&= td \min_{\tau\in [0,t]}\mbox{dist}(\Phi(\tau),\mathscr{Z})^{N}\label{lemma_init_datum_estimate_time_contraction_estimate_eq_6_6}.\\
	\end{align}
	Therefore, by \eqref{lemma_init_datum_upper_bound},
	\begin{equation}
		td \min_{\tau\in [0,t]}\mbox{dist}(\Phi(\tau),\mathscr{Z})^{N}\leq K\left(\Phi\right)\leq \frac{2\pi\sqrt{n}+L}{2}\mbox{dist}(\Phi_0,\mathscr{Z}),
	\end{equation}
	whence
	\begin{equation}\label{lemma_init_datum_estimate_min_bound}
		\min_{\tau\in [0,t]}\mbox{dist}(\Phi\left(\tau\right),\mathscr{Z})\leq \sqrt[N]{\frac{2\pi\sqrt{n}+L}{2td}\mbox{dist}(\Phi_0,\mathscr{Z})}.
	\end{equation}
	\textit{Step 5} \ \textbf{Proof of \eqref{lemma_init_datum_estimate_eq2}}\\
	Let $\tau_t\in \mbox{argmin}_{[0,t]}\mbox{dist}(\Phi(\cdot),\mathscr{Z})$. By applying \eqref{lemma_init_datum_estimate_eq1} with initial datum $\Phi\left(\tau_t\right)$ at initial time $\tau_t$, we get
	\begin{align}
	\mbox{dist}(\Phi(t),\mathscr{Z})&\leq\sqrt[\tilde{N}]{\sigma_1\mbox{dist}\left(\Phi\left(\tau_t\right),\mathscr{Z}\right)}\nonumber\\
	&=\sqrt[\tilde{N}]{\sigma_1\min_{\tau\in [0,t]}\mbox{dist}(\Phi(\tau),\mathscr{Z})}.\\
	\end{align}
	Then, combining the above inequality with \eqref{lemma_init_datum_estimate_min_bound}, we obtain
	\begin{align}
		\mbox{dist}(\Phi(t),\mathscr{Z})&\leq\sqrt[\tilde{N}]{\sigma_1\min_{\tau\in [0,t]}\mbox{dist}(\Phi(\tau),\mathscr{Z})}\nonumber\\
		&\leq\sqrt[\tilde{N}]{\sigma_1\sqrt[N]{\frac{2\pi\sqrt{n}+L}{2td}\mbox{dist}(\Phi_0,\mathscr{Z})}}\nonumber\\
		&=\sqrt[N\tilde{N}]{\frac{\left(2\pi\sqrt{n}+L\right)\sigma_1^N}{2d}\frac{\mbox{dist}(\Phi_0,\mathscr{Z})}{t}},\\
	\end{align}
	as required.
\end{proof}

\begin{remark}\label{remark_saturation}
	As kindly suggested by one of the referees, in practical applications, the control input $\dot{\Phi}$ is subject to saturation. Indeed, physical constraints must be taken into account, e.g. balancing masses cannot rotate too fast. Several rotors control strategies available in the literature consider saturation effects, such as \cite{FU201852,ZHOU202011153}. In our case, we are able to guarantee that our control magnitude does not exceed an explicit threshold, constrains being intrinsically imposed in the functional definition \eqref{functional}. Indeed, let us work in the framework of Lemma \ref{lemma_init_datum_estimate}. By employing \eqref{lemma_init_datum_estimate_eq1} and energy conservation \eqref{Energy_conservation}, we obtain
	\begin{equation}
		\frac12 \left\|\dot{\Phi}(t)\right\|^2=Q\left(\Phi(t)\right)\leq L\mbox{dist}(\Phi(t),\mathscr{Z})\leq L\sqrt[\tilde{N}]{\sigma_1\mbox{dist}(\Phi_0,\mathscr{Z})},
	\end{equation}
	whence
	\begin{equation}
		\left\|\dot{\Phi}(t)\right\|\leq \sqrt{2L}\sqrt[2\tilde{N}]{\sigma_1\mbox{dist}(\Phi_0,\mathscr{Z})},
	\end{equation}
	for any $t\geq 0$. This gives an upper bound for the magnitude of the optimal control.
\end{remark}

\begin{proposition}\label{prop4gen}
	Assume $\mathscr{Z}\subset \mathbb{T}^n$ is nonempty and finite and $Q$ real analytic. Consider $\Phi\in \mathscr{A}$ global minimizer of \eqref{functional_gen}. Then,
	\begin{enumerate}
		\item there exists $\overline{\Phi}\in \mbox{argmin}\left(Q\right)$ such that
		\begin{equation}\label{Zfinite_state_turnpike}
		\Phi(t)\underset{t\to +\infty}{\longrightarrow}\overline{\Phi},
		\end{equation}
		\begin{equation}\label{velocity_conv}
		\dot{\Phi}(t)\underset{t\to +\infty}{\longrightarrow}0
		\end{equation}
		and
		\begin{equation}
		\left|Q\left(\Phi(t)\right)\right|\underset{t\to +\infty}{\longrightarrow}0.
		\end{equation}
		\item the Euler-Lagrange equations can be complemented with final condition
		\begin{equation}\label{EL_gen}
		\begin{dcases}
		\ddot{\Phi}=\nabla Q\left(\Phi\right)\hspace{0.6 cm}  & \hspace{0.10 cm}t\in (0,+\infty)\\
		\Phi(0)=\Phi_0, \hspace{0.16 cm} \dot{\Phi}(T)\underset{T\to +\infty}{\longrightarrow}0.
		\end{dcases}
		\end{equation}		
	\end{enumerate}
\end{proposition}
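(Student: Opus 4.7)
The plan is to deduce Proposition \ref{prop4gen} essentially as a corollary of Lemma \ref{lemma_init_datum_estimate}, together with Proposition \ref{prop_EL_no_final_condition} (regularity, Euler--Lagrange, and energy conservation). Since $Q\geq 0$ and $\mathscr{Z}\neq\varnothing$, we have $\mathscr{Z}=\mbox{argmin}(Q)$, so the zero set of $Q$ coincides with the set over which we want to establish convergence. Moreover, $Q$ being real analytic on the compact torus, the \L ojasiewicz inequality \eqref{Lojasiewicz} holds globally with uniform constants $d>0$ and $N>0$, which is exactly what Lemma \ref{lemma_init_datum_estimate} requires.

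First, I would apply estimate \eqref{lemma_init_datum_estimate_eq2} of Lemma \ref{lemma_init_datum_estimate} to the optimal trajectory $\Phi$. This gives
\begin{equation*}
\mbox{dist}(\Phi(t),\mathscr{Z}) \;\leq\; \sqrt[N\tilde{N}]{\sigma_2\,\frac{\mbox{dist}(\Phi_0,\mathscr{Z})}{t}} \;\underset{t\to+\infty}{\longrightarrow}\; 0.
\end{equation*}
The main nonroutine step is to upgrade this convergence of distances into convergence to a \emph{single} point. Here the finiteness assumption on $\mathscr{Z}=\{\overline{\Phi}^{(1)},\dots,\overline{\Phi}^{(k)}\}$ enters decisively: set $\delta\coloneqq \min_{i\neq j}\|\overline{\Phi}^{(i)}-\overline{\Phi}^{(j)}\|>0$. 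By the displayed decay there exists $T>0$ such that $\mbox{dist}(\Phi(t),\mathscr{Z})<\delta/3$ for all $t\geq T$. On $[T,+\infty)$ the map $t\mapsto \Phi(t)$ is continuous (indeed smooth, by Proposition \ref{prop_EL_no_final_condition}(1)), and for each such $t$ there is a \emph{unique} nearest point $p(t)\in\mathscr{Z}$; by the separation $\delta$, the map $t\mapsto p(t)$ is locally constant, hence constant on the connected set $[T,+\infty)$. Call its value $\overline{\Phi}$. Then $\|\Phi(t)-\overline{\Phi}\|=\mbox{dist}(\Phi(t),\mathscr{Z})\to 0$, proving \eqref{Zfinite_state_turnpike}, and continuity of $Q$ together with $Q(\overline{\Phi})=0$ yields $Q(\Phi(t))\to 0$.

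To obtain \eqref{velocity_conv}, I would invoke the energy conservation identity \eqref{Energy_conservation} from Proposition \ref{prop_EL_no_final_condition}(3),
\begin{equation*}
\tfrac12\|\dot{\Phi}(t)\|^2 \;=\; Q(\Phi(t)) \;\underset{t\to+\infty}{\longrightarrow}\; 0,
\end{equation*}
which directly gives $\dot{\Phi}(t)\to 0$. Finally, statement (2) follows by taking the Euler--Lagrange system \eqref{EL_no_final_condition} already proved in Proposition \ref{prop_EL_no_final_condition}(2) and simply appending the asymptotic condition $\dot{\Phi}(T)\to 0$ just established, giving \eqref{EL_gen}.

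The main obstacle, as indicated, is the passage from ``distance to the discrete set $\mathscr{Z}$ tends to zero'' to ``convergence to a specific element of $\mathscr{Z}$''. Without finiteness (or at least isolation of components), the trajectory could in principle drift along a connected component of the minimizing set; the finiteness hypothesis is precisely what rules this out via the elementary topological argument above. Everything else in the proof is a direct packaging of the quantitative Lemma \ref{lemma_init_datum_estimate} and the energy identity.
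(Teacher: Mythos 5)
Your proposal is correct and follows essentially the same route as the paper: apply the decay estimate \eqref{lemma_init_datum_estimate_eq2} of Lemma \ref{lemma_init_datum_estimate}, use finiteness of $\mathscr{Z}$ plus continuity of $\Phi$ to pin down a single limit point $\overline{\Phi}$, deduce \eqref{velocity_conv} from energy conservation \eqref{Energy_conservation}, and append the final condition to \eqref{EL_no_final_condition}. The only difference is that you spell out the locally-constant nearest-point argument that the paper states tersely as the existence of a unique $\overline{\Phi}$ with $\mbox{dist}(\Phi(t),\mathscr{Z})=\|\Phi(t)-\overline{\Phi}\|$ for $t$ large.
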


\begin{proof}[Proof of Proposition \ref{prop4gen}]
	We start proving \eqref{Zfinite_state_turnpike}.\\
	Let $\Phi$ be a minimizer of \eqref{functional_gen}. Let us prove \eqref{Zfinite_state_turnpike}. prove Since $\mathscr{Z}\subset \mathbb{T}^n$ is finite and $\Phi$ is continuous,
	there exists a unique $\overline{\Phi}\in\mathscr{Z}$, such that
	\begin{equation}
	\mbox{dist}(\Phi(t),\mathscr{Z})=\|\Phi(t)-\overline{\Phi}\|,\hspace{0.3 cm}\forall \ t\geq \overline{t},
	\end{equation}
	with $\overline{t}$ large enough. By the above equality and \eqref{lemma_init_datum_estimate_eq2}, we have
	\begin{equation}
	\|\Phi(t)-\overline{\Phi}\|= \mbox{dist}(\Phi(t),\mathscr{Z})\underset{t\to +\infty}{\longrightarrow}0,
	\end{equation}
	as required. By the above convergence and \eqref{Energy_conservation}, we immediately have $\dot{\Phi}(t)\underset{t\to +\infty}{\longrightarrow}0$.
	
	\eqref{EL_gen} follows from Proposition \ref{prop_EL_no_final_condition} together with \eqref{velocity_conv}.
\end{proof}

Note that \eqref{EL_gen} can be seen as a system of two coupled elliptic PDEs, with a Dirichlet condition at time $t=0$ and a Neumann condition at $t=+\infty$.

Equivalently, we can formulate the first order optimality conditions as a state-adjoint state first order system.
\begin{equation}\label{Pontryagin_gen}
\begin{dcases}
\dot{\Phi}=-q\hspace{0.6 cm} & \hspace{0.10 cm}t\in (0,+\infty)\\
-\dot{q}=\nabla Q(\Phi)\hspace{0.6 cm} & \hspace{0.10 cm}t\in (0,+\infty)\\
\Phi(0)=\Phi_0, \hspace{0.16 cm} q(T)\underset{T\to +\infty}{\longrightarrow}0.
\end{dcases}
\end{equation}

By using the above optimality system, we can improve the decay rate estimate.
\begin{proposition}\label{prop_stab_gen_exp}
	Suppose $\mathscr{Z}\subset \mathbb{T}^n$ is nonempty and finite and $Q$ real analytic. In addition, assume
	\begin{equation}\label{Hessian_condition}
	\nabla^2Q(\overline{\Phi})\hspace{0.3 cm}\mbox{is (strictly) positive definite},
	\end{equation}
	Consider $\Phi\in \mathscr{A}$ global minimizer of \eqref{functional_gen}. Then, we have the exponential estimate, for any $t\geq 0$
	\begin{equation}\label{prop4gen_eq8}
	\|\Phi(t)-\overline{\Phi}\|+\|\dot{\Phi}(t)\|+\left|Q\left(\Phi(t)\right)\right|\leq C\exp\left(-\mu t\right),
	\end{equation}
	with $C, \hspace{0.07 cm}\mu >0$ independent of $t$.
\end{proposition}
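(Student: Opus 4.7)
The plan is to exploit the Hamiltonian/Pontryagin first order system \eqref{Pontryagin_gen} and to apply the Stable Manifold Theorem at the equilibrium $(\overline{\Phi},0)$, following the strategy outlined in the introduction (see \cite[Corollary page 115]{perko2013differential} and \cite{TGS}). Since $\overline{\Phi}\in\mathscr{Z}=\mbox{argmin}(Q)$ gives $\nabla Q(\overline{\Phi})=0$, the point $(\overline{\Phi},0)$ is indeed a rest point of the autonomous system $\dot{\Phi}=-q$, $\dot{q}=-\nabla Q(\Phi)$.

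First I would linearise the Pontryagin system at $(\overline{\Phi},0)$. The Jacobian reads
\begin{equation*}
J=\begin{pmatrix} 0 & -I_n \\ -\nabla^2 Q(\overline{\Phi}) & 0 \end{pmatrix}.
\end{equation*}
A short direct computation shows that, if $\lambda>0$ is an eigenvalue of the strictly positive definite matrix $\nabla^2 Q(\overline{\Phi})$ with eigenvector $v$, then $(v,\mp\sqrt{\lambda}\,v)$ are eigenvectors of $J$ associated with the eigenvalues $\pm\sqrt{\lambda}$. Thus $J$ possesses $n$ strictly negative and $n$ strictly positive real eigenvalues, so $(\overline{\Phi},0)$ is a hyperbolic equilibrium with an $n$-dimensional stable subspace $E^s$ and an $n$-dimensional unstable subspace $E^u$.

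Next, the Stable Manifold Theorem furnishes a neighborhood $U$ of $(\overline{\Phi},0)$ and an $n$-dimensional $C^\infty$ local stable manifold $W^s_{\mathrm{loc}}\subset U$ tangent to $E^s$ at $(\overline{\Phi},0)$, with the property that every trajectory issued from $W^s_{\mathrm{loc}}$ stays in $U$ for all forward times and decays exponentially to $(\overline{\Phi},0)$; more precisely, there exist $C,\mu>0$ depending only on $J$ such that
\begin{equation*}
\|(\Phi(t)-\overline{\Phi},q(t))\|\leq C\exp(-\mu(t-t_0))\|(\Phi(t_0)-\overline{\Phi},q(t_0))\|
\end{equation*}
whenever $(\Phi(t_0),q(t_0))\in W^s_{\mathrm{loc}}$. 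By Proposition \ref{prop4gen}, the optimal pair $(\Phi(\cdot),-\dot\Phi(\cdot))$ satisfies $(\Phi(t),q(t))\to(\overline{\Phi},0)$; hence, for $t_0$ large enough the orbit enters $U$. Shrinking $U$ if necessary so that $W^s_{\mathrm{loc}}$ coincides with the set of points in $U$ whose forward orbit is entirely contained in $U$, the convergence to the equilibrium forces $(\Phi(t_0),q(t_0))\in W^s_{\mathrm{loc}}$, since any nonzero component along $E^u$ would be amplified and eject the trajectory from $U$. Absorbing the transient interval $[0,t_0]$ into the constant yields the exponential bound for $\|\Phi(t)-\overline{\Phi}\|+\|\dot{\Phi}(t)\|$.

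For the last term, since $Q(\overline{\Phi})=0$ and $\nabla Q(\overline{\Phi})=0$, a second order Taylor expansion gives $|Q(\Phi(t))|\leq \tfrac{1}{2}\|\nabla^2 Q\|_{\infty}\|\Phi(t)-\overline{\Phi}\|^2\leq C'\exp(-2\mu t)$, completing \eqref{prop4gen_eq8} after enlarging $C$ accordingly. The main obstacle I anticipate is the final step of the third paragraph: one has to argue that the optimal orbit truly lies on the local stable manifold rather than merely accumulating at the equilibrium. This is settled by the standard characterisation of $W^s_{\mathrm{loc}}$ as the set of trapped, convergent forward orbits in a sufficiently small neighborhood, together with the hyperbolic dichotomy coming from the eigenvalue computation, but it is the one step where merely invoking Proposition \ref{prop4gen} does not suffice and one must genuinely use the invariant manifold structure.
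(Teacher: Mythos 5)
Your proposal is correct and follows essentially the same route as the paper: the paper's proof (via Lemma \ref{lemma_exp_est_gen}) also rewrites the optimality system as a first-order autonomous system in $(\Phi-\overline{\Phi},\dot{\Phi})$ (trivially equivalent to your Pontryagin variables $q=-\dot{\Phi}$), proves hyperbolicity of the linearization from the positive definiteness of $\nabla^2Q(\overline{\Phi})$ (by conjugating with an explicit matrix built from $C=\sqrt{\nabla^2Q(\overline{\Phi})}$, where you instead exhibit the eigenvectors $(v,\mp\sqrt{\lambda}v)$ directly), and concludes by placing the convergent orbit on the local stable manifold via \cite[Corollary page 115]{perko2013differential}. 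The only cosmetic difference is your final quadratic Taylor bound for $|Q(\Phi(t))|$ where the paper uses the global Lipschitz bound; both suffice.
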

The proof of Proposition \ref{prop_stab_gen_exp} is a consequence of Proposition \ref{prop4gen} and Lemma \ref{lemma_exp_est_gen} stated and proved below, inspired by \cite{trelat2015turnpike} and \cite{TGS}.
\begin{lemma}\label{lemma_exp_est_gen}
	Let $\Phi_0\in\mathbb{T}^n$ and $\Phi \in C^{\infty}(\mathbb{R}^+;\mathbb{T}^n)$ solution to
	\begin{equation}\label{EL_5_gen}
	\begin{dcases}
	\ddot{\Phi}=\nabla Q(\Phi)\hspace{0.6 cm} & \hspace{0.10 cm}t\in (0,+\infty)\\
	\Phi(0)=\Phi_0.
	\end{dcases}
	\end{equation}
	Suppose the existence $\overline{\Phi}\in\mathscr{Z}$ such that
	\begin{equation}\label{conv_1_gen}
	\Phi(t)\underset{t\to +\infty}{\longrightarrow}\overline{\Phi}
	\end{equation}
	and
	\begin{equation}\label{conv_2_gen}
	\dot{\Phi}(t)\underset{t\to +\infty}{\longrightarrow}0.
	\end{equation}
	Assume the condition \eqref{Hessian_condition} holds. Then,
	\begin{equation}
	\|\Phi(t)-\overline{\Phi}\|+\|\dot{\Phi}(t)\|+\left|Q\left(\Phi(t)\right)\right|\leq C\exp\left(-\mu t\right),\hspace{0.3 cm}\forall \ t\geq 0,
	\end{equation}
	with $\mu >0$.
\end{lemma}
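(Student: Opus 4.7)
The plan is to reduce the second-order ODE to a first-order autonomous system on the tangent bundle $\mathbb{T}^n\times\mathbb{R}^n$ and then apply the Stable Manifold Theorem at the hyperbolic equilibrium $(\overline{\Phi},0)$. Concretely, I set $X(t)\coloneqq(\Phi(t)-\overline{\Phi},\dot{\Phi}(t))\in\mathbb{R}^{2n}$ (using a local chart around $\overline{\Phi}$) so that the Euler-Lagrange equation \eqref{EL_5_gen} becomes
\begin{equation}
\dot X=F(X),\qquad F(x,y)=\bigl(y,\nabla Q(\overline{\Phi}+x)\bigr),
\end{equation}
with $F(0)=0$ because $\nabla Q(\overline{\Phi})=0$ (since $\overline{\Phi}\in\mathscr{Z}=\operatorname{argmin} Q$, together with $Q\geq 0$).

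Next I would compute the Jacobian at the equilibrium,
\begin{equation}
DF(0)=\begin{pmatrix} 0 & I_n \\ \nabla^2 Q(\overline{\Phi}) & 0 \end{pmatrix},
\end{equation}
and observe that its eigenvalues are exactly $\pm\sqrt{\mu_j}$ where $\mu_1,\dots,\mu_n>0$ are the eigenvalues of the strictly positive definite matrix $\nabla^2 Q(\overline{\Phi})$ (this follows from the block-determinant identity $\det(\lambda^2 I_n-\nabla^2 Q(\overline{\Phi}))$). Hence $DF(0)$ is hyperbolic with exactly $n$ stable and $n$ unstable real eigenvalues, and the spectral gap from the imaginary axis is at least $\mu\coloneqq\sqrt{\mu_{\min}}>0$.

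I then invoke the local Stable Manifold Theorem (e.g.\ \cite[Corollary page 115]{perko2013differential}): there exist a neighborhood $U$ of $0$ and an $n$-dimensional $C^\infty$ local stable manifold $W^s_{\mathrm{loc}}\subset U$ such that every forward orbit of $F$ that remains in $U$ and converges to $0$ lies entirely in $W^s_{\mathrm{loc}}$, and on $W^s_{\mathrm{loc}}$ one has the exponential bound $\|X(t)\|\leq C\exp(-\mu t)$ for every sufficiently small initial datum. By the hypotheses \eqref{conv_1_gen}–\eqref{conv_2_gen}, $X(t)\to 0$ as $t\to+\infty$; therefore there exists $T_0\geq 0$ such that $X(t)\in U$ for all $t\geq T_0$, so the shifted orbit $t\mapsto X(T_0+t)$ must lie on $W^s_{\mathrm{loc}}$ and decays exponentially. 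Absorbing the bounded transient on $[0,T_0]$ into the constant $C$ gives $\|\Phi(t)-\overline{\Phi}\|+\|\dot{\Phi}(t)\|\leq C\exp(-\mu t)$ for all $t\geq 0$.

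Finally, to estimate $|Q(\Phi(t))|$, I use that $Q(\overline{\Phi})=0$ and $\nabla Q(\overline{\Phi})=0$, so Taylor expansion of the smooth (indeed real analytic) function $Q$ at $\overline{\Phi}$ gives $|Q(\Phi(t))|\leq \tfrac12\|\nabla^2 Q\|_{L^\infty}\|\Phi(t)-\overline{\Phi}\|^2$, which inherits the exponential decay (with rate $2\mu$, hence also with rate $\mu$). Combining, one obtains \eqref{prop4gen_eq8}. The main technical point I expect to handle carefully is the passage from the manifold $\mathbb{T}^n$ to a Euclidean chart in which the Stable Manifold Theorem is stated: since the conclusion is local and the trajectory eventually lives in an arbitrarily small neighborhood of $\overline{\Phi}$, this is essentially cosmetic, but it must be made explicit so that the hyperbolicity computation and the exponential bound truly apply to $\Phi$, $\dot{\Phi}$ on the torus.
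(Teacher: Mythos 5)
Your proposal follows essentially the same route as the paper: rewrite \eqref{EL_5_gen} as a first-order system in $(\Phi-\overline{\Phi},\dot{\Phi})$, verify that the equilibrium is hyperbolic because the Jacobian $\begin{pmatrix} 0 & I_n \\ \nabla^2 Q(\overline{\Phi}) & 0 \end{pmatrix}$ has spectrum off the imaginary axis, and conclude by the Stable Manifold Theorem that the convergent trajectory lies on the stable manifold and decays exponentially. The only cosmetic differences are that the paper exhibits hyperbolicity via an explicit similarity transformation block-diagonalizing the Jacobian into $\operatorname{diag}(C,-C)$ with $C^2=\nabla^2 Q(\overline{\Phi})$ rather than your block-determinant eigenvalue computation, and it bounds $|Q(\Phi(t))|$ by the Lipschitz estimate instead of your second-order Taylor expansion; both variants are correct.
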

\begin{proof}[Proof of Lemma \ref{lemma_exp_est_gen}]
	\textit{Step 1} \  \textbf{Reduction the a first order problem}\\
	Take any $\Phi$ solution to \eqref{EL_5_gen}. Then, the function
	\begin{equation}
	\textbf{x}\coloneqq 
	\begin{tikzpicture}[baseline=(current bounding box.center)]
	\matrix (b) [matrix of math nodes,nodes in empty cells,right delimiter={]},left delimiter={[} ]{
		\Phi-\overline{\Phi} \\
		\dot{\Phi}  \\
	} ;
	\end{tikzpicture}
	\end{equation}
	solves the first order problem
	\begin{equation}\label{EL0_first_order}
	\begin{dcases}
	\dot{\textbf{x}}= \textbf{f}( \textbf{x})\hspace{0.6 cm} & \hspace{0.10 cm}t\in (0,+\infty)\\
	\textbf{x}(T)\underset{T\to +\infty}{\longrightarrow}0.
	\end{dcases}
	\end{equation}
	where
	\begin{equation}
	\textbf{f}( \textbf{x})\coloneqq
	\begin{tikzpicture}[baseline=(current bounding box.center)]
	\matrix (b) [matrix of math nodes,nodes in empty cells,right delimiter={]},left delimiter={[} ]{
		x_{n+1}  \\
		\\
		\\
		\\
		\\
		x_{2n}  \\
		\nabla Q\left(\left(x_1,\dots,x_n\right)+\overline{\Phi}\right) \\
	} ;
	\draw[loosely dotted] (b-2-1)--(b-6-1);
	\end{tikzpicture}.
	\end{equation}
	\textit{Step 2} \  \textbf{$\textbf{0}$ is an hyperbolic equilibrium point.}\\
	We observe that $\textbf{f}(0)=0$, since $\overline{\Phi}$ is a zero of $Q$. Moreover, the Jacobian of $ \textbf{f}$ at $ \textbf{x}=0$ is a block matrix
	\begin{equation}
	D\textbf{f}(0)=\begin{pmatrix}
	0&I_n\\
	\nabla^2Q\left(\overline{\Phi}\right)&0,
	\end{pmatrix}
	\end{equation}
	where $I_n$ is the $n\times n$ identity matrix. By assumption \eqref{Hessian_condition}, $\nabla^2Q\left(\overline{\Phi}\right)$ is positive definite. Then, there exists $C$ symmetric positive definite, such that $C^2=\nabla^2Q\left(\overline{\Phi}\right)$. Following \cite[subsection III.B]{SMA}, we introduce the matrix
	\begin{equation}\label{Lambda}
	\Lambda\coloneqq
	\frac{1}{2}\begin{pmatrix}
	2I_n&-C^{-1}\\
	2C&I_n
	\end{pmatrix}.
	\end{equation}
	Since $\nabla^2Q\left(\overline{\Phi}\right)$ is (strictly) positive definite, $\Lambda$ is invertible\footnote{
		\begin{equation}
		\Lambda^{-1}=\begin{pmatrix}
		\frac12I_2&\frac12 C^{-1}\\
		-C&I_2.
		\end{pmatrix}
		\end{equation}
	} and
	\begin{equation}
	\Lambda^{-1}D\textbf{f}(0) \hspace{0.1 cm} \Lambda=\begin{pmatrix}
	C&0\\
	0&-C.
	\end{pmatrix}
	\end{equation}
	Hence, the spectrum of the jacobian $D\textbf{f}(0)$ does not intersect the imaginary axis, whence $\textbf{0}$ is an hyperbolic equilibrium point for \eqref{EL0_first_order}, as required.
	\\
	\textit{Step 3} \  \textbf{Conclusion by applying the Stable Manifold Theorem}\\
	As we have seen in step 2, $\textbf{0}$ is an hyperbolic equilibrium point for \eqref{EL0_first_order}. Then, by the Stable Manifold Theorem (see e.g. \cite[section 2.7]{perko2013differential} or \cite{TGS}), the stable and unstable manifolds for \eqref{EL0_first_order} exist in a neighborhood of $\textbf{0}$. Besides, thanks to \eqref{conv_1_gen} and \eqref{conv_2_gen}, $\textbf{x}=\left(\Phi-\overline{\Phi},\dot{\Phi}\right)$ belongs to the stable manifold of the above problem.
	
	Hence, by Stable Manifold theory (see, e.g. \cite[Corollary page 115]{perko2013differential} or \cite{TGS}), we have for some $\mu >0$
	\begin{equation}\label{exp_est_parz_1}
	\| \textbf{x}(t)\|\leq C\exp(-\mu t),\hspace{0.3 cm}\forall \ t\geq 0,
	\end{equation}
	which yields
	\begin{equation}
	\|\Phi(t)-\overline{\Phi}\|+\|\dot{\Phi}(t)\|\leq C\exp\left(-\mu t\right),\hspace{0.3 cm}\forall \ t\geq 0.
	\end{equation}		
	To conclude the proof, we observe that $Q$ is globally Lipschitz
	and $Q\left(\overline{\Phi}\right)=0$. Then,
	\begin{eqnarray*}
		\left|Q\left(\Phi(t)\right)\right|&=&\left|Q\left(\Phi(t)\right)-Q\left(\overline{\Phi}\right)\right|\leq L\|\Phi(t)-\overline{\Phi}\|\nonumber\\
		&\leq& C\exp\left(-\mu t\right).
	\end{eqnarray*}
	where in the last inequality we have employed \eqref{exp_est_parz_1}.
\end{proof}

\subsection{Proof of Proposition \ref{prop_group}}
\label{subsec:2.4}

We prove Proposition \ref{prop_group} employing the general results of subsection \ref{subsec:2.3}. The numbering $(1)$, $(2)$ and $(3)$ refers to the numbered statements in Proposition \ref{prop_group}.

\begin{proof}[Proof of Proposition \ref{prop_group}]
	The existence of minimizers for \eqref{functional} follows from Proposition \ref{prop3_gen}, with $K= J$.\\
	\textit{Step 1} \ \textbf{Reduction to two angles}\\
	By \eqref{split}, the imbalance indicator  splits as $G(\alpha_1,\gamma_1,\alpha_2,\gamma_2)=G_1(\alpha_1,\gamma_1)+G_2(\alpha_2,\gamma_2)$, whence
	\begin{equation}
	\hat{G}(\alpha_1,\gamma_1,\alpha_2,\gamma_2)=\hat{G}_1(\alpha_1,\gamma_1)+\hat{G}_2(\alpha_2,\gamma_2),
	\end{equation}
	with $\hat{G}_1(\alpha_1,\gamma_1)\coloneqq G_1(\alpha_1,\gamma_1)-\inf G_1$ and $\hat{G}_2(\alpha_2,\gamma_2)\coloneqq G_2(\alpha_2,\gamma_2)-\inf G_2$. Then, the functional
	\begin{equation}\label{functional_split}
	J(\Phi)=J_1(\alpha_1,\gamma_1)+J_2(\alpha_2,\gamma_2),
	\end{equation}
	where
	\begin{equation}\label{functional_split_1}
	J_1(\alpha_1,\gamma_1)\coloneqq\frac{1}{2}\int_0^{\infty} \left[|\dot{\alpha}_1|^2+|\dot{\gamma}_1|^2+{\beta}\hat{G}_1(\alpha_1,\gamma_1)\right] dt
	\end{equation}
	and
	\begin{equation}\label{functional_split_2}
	J_2(\alpha_2,\gamma_2)\coloneqq\frac{1}{2}\int_0^{\infty} \left[|\dot{\alpha}_2|^2+|\dot{\gamma}_2|^2+{\beta}\hat{G}_2(\alpha_2,\gamma_2)\right] dt.
	\end{equation}
	This enables us to work on $J_1$ and $J_2$ separately. From the physical viewpoint, the functional $J_1$ is related to the first balancing head, while $J_2$ is related to the second balancing head. Both $J_1$ and $J_2$ fit in a general class of functionals \eqref{functional_gen}, defining
	\begin{eqnarray}
	Q_i(\alpha_i,\gamma_i)&\coloneqq&\frac{\beta}{2}\left[\left|\cos(\gamma_i)\cos(\alpha_i)-c^i_1\right|^2\right. \label{Qi} \\
	\nonumber&\;&\left.+\left|\cos(\gamma_i)\sin(\alpha_i)-c^i_2\right|^2\right],
	\end{eqnarray}
	possibly remaining $\beta$ after the absorption of the coefficient $\frac{1}{2m_ir_i\omega^2}$ and
	\begin{equation}\label{c_J_i_proof}
	c^i=\frac{1}{{2m_ir_i\omega^2}}\left(F_{i,x},F_{i,y}\right).
	\end{equation}
	\textit{Step 2} \  \textbf{Proof of (2)}\\
	For any $\Phi=\left(\alpha_1,\gamma_1;\alpha_2,\gamma_2\right)$ minimizer of \eqref{functional}, $\left(\alpha_1,\gamma_1\right)$ minimizes $J_1$ and $\left(\alpha_2,\gamma_2\right)$ minimizes $J_2$. We apply Proposition \ref{prop_EL_no_final_condition} to $J_1$ and $J_2$, computing the gradient of $Q_i$ defined in \eqref{Qi}
	\begin{eqnarray}\label{nablaQi_EL}
	\frac{\partial Q_i}{\partial \alpha_i}\left(\alpha_i,\gamma_i\right)&=&\beta\cos\left(\gamma_i\right)\left[c^i_1\sin\left(\alpha_i\right)-c^i_2\cos\left(\alpha_i\right)\right]\nonumber\\
	\frac{\partial Q_i}{\partial \gamma_i}\left(\alpha_i,\gamma_i\right)&=&\beta\sin\left(\gamma_i\right)\left[c^i_1\cos\left(\alpha_i\right)+c^i_2\sin\left(\alpha_i\right)\right.\nonumber\\
	&\;&\left.-\cos\left(\gamma_i\right)\right].
	\end{eqnarray}
	\textit{Step 3} \\  \textbf{Proof of (3) and (4)}\\
	By Step 1, we reduce to prove the assertion for minimizers of $J_1$ and $J_2$. Let $\left(\alpha_i,\gamma_i\right)$ be a minimizer of $J_i$, for some $i=1,2$.
	
	\vspace*{4pt}\noindent\textbf{Case 1.} $\mbox{argmin}\left(Q_i\right)\subset \mathbb{T}^2$ is finite.
	
	If $\mbox{argmin}\left(Q_i\right)\subset \mathbb{T}^2$ is finite, we directly apply Proposition \ref{prop4gen} to $K\coloneqq J_i$, getting the required convergences. If, in addition, \eqref{prop_group_eq4} is verified, we want to prove that the Hessian of $Q_i$ at the steady optimum is positive definite. To this end, we compute $\nabla^2Q_i(\alpha_i,\gamma_i)$
	\begin{eqnarray}\label{secondderivativesQ_i_3}
	\frac{\partial^2 Q_i}{\partial \alpha^2}\left(\alpha_i,\gamma_i\right)&=&\beta\cos\left(\gamma_i\right)\left[c^i_1\cos\left(\alpha_i\right)+c^i_2\sin\left(\alpha_i\right)\right]\nonumber\\
	\frac{\partial^2 Q_i}{\partial \gamma_i^2}\left(\alpha_i,\gamma_i\right)&=&\beta\cos\left(\gamma_i\right)\left[c^i_1\cos\left(\alpha_i\right)+c^i_2\sin\left(\alpha_i\right)\right.\nonumber\\
	&\;&\left.-\cos\left(\gamma_i\right)\right]+\beta\sin\left(\gamma_i\right)^2\nonumber\\
	\frac{\partial^2 Q_i}{\partial \gamma_i\partial\alpha_i}\left(\alpha_i,\gamma_i\right)&=&\beta\sin\left(\gamma_i\right)\left[-c^i_1\sin\left(\alpha_i\right)+c^i_2\cos\left(\alpha_i\right)\right]\nonumber\\
	\end{eqnarray}
	Now, let $\overline{\Phi}\in\mbox{argmin}\left(Q_i\right)$.
	Since $\overline{\Phi}\in\mbox{argmin}\left(Q_i\right)$ and \eqref{prop_group_eq4} holds, by Lemma \ref{lemma_argming},
	\begin{equation}
	c^i=\cos\left(\overline{\gamma}_i\right)\left(\cos\left(\overline{\alpha}_i\right),\sin\left(\overline{\alpha}_i\right)\right).
	\end{equation}
	and $\sin\left(\overline{\gamma}_i\right)\neq 0$. Hence, by \eqref{nablaQi_EL}, $c_1\cos\left(\overline{\alpha}_i\right)+c_2\sin\left(\overline{\alpha}_i\right)-\cos\left(\overline{\gamma}_i\right)=0$. We plug these results into \eqref{secondderivativesQ_i_3}, obtaining
	\begin{eqnarray}\label{pos_def_hess}
	\frac{\partial^2 Q_i}{\partial \alpha_i^2}\left(\overline{\alpha}_i,\overline{\gamma}_i\right)&=&\beta\cos\left(\overline{\gamma}_i\right)^2\left[\cos\left(\overline{\alpha}_i\right)^2+\sin\left(\overline{\alpha}_i\right)^2\right]\nonumber\\
	&=&\beta\cos\left(\overline{\gamma}_i\right)^2\nonumber\\
	\frac{\partial^2 Q_i}{\partial \gamma_i^2}\left(\overline{\alpha}_i,\overline{\gamma}_i\right)&=&\beta\sin\left(\overline{\gamma}_i\right)^2\nonumber\\
	\frac{\partial^2 Q_i}{\partial \gamma_i\partial\alpha_i}\left(\overline{\alpha_i},\overline{\gamma}_i\right)&=&\beta\|c\|\sin\left(\overline{\gamma}_i\right)\left[-\cos\left(\overline{\alpha}_i\right)\sin\left(\overline{\alpha}_i\right)\right.\nonumber\\
	&\;&\left.+\sin\left(\overline{\alpha}_i\right)\cos\left(\overline{\alpha}_i\right)\right]=0,
	\end{eqnarray}
	namely the Hessian of $Q_i$ computed at $\left(\overline{\alpha}_i,\overline{\gamma}_i\right)$ is diagonal. Using once more \eqref{prop_group_eq4} and by Lemma \ref{lemma_argming}, we have both $\cos\left(\overline{\gamma}\right)\neq 0$ and $\sin\left(\overline{\gamma}\right)\neq 0$. Then,
	the Hessian of $Q_i$ computed at $\left(\overline{\alpha}_i,\overline{\gamma}_i\right)$ is (strictly) positive definite. We apply Proposition \ref{prop4gen} (2) to conclude.
	
	\vspace*{4pt}\noindent\textbf{Case 2.} $\mbox{argmin}\left(Q_i\right)\subset \mathbb{T}^2$ is a continuum.
	
	From the physical viewpoint, this occurs when in the plane $\pi_i$ there is no imbalance, namely $F_i=0$. Now, by Lemma \ref{lemma_argming}, $\mbox{argmin}\left(Q_i\right)\subset \mathbb{T}^2$ is a continuum if and only if $c^i=0$, namely
	\begin{equation}
	Q_i(\alpha_i,\gamma_i)= \frac{\beta}{2}\left[\left|\cos(\gamma_i)\cos(\alpha_i)\right|^2+\left|\cos(\gamma_i)\sin(\alpha_i)\right|^2\right].
	\end{equation}
	and the Euler-Lagrange equations satisfied by $\left(\alpha_i,\gamma_i\right)$ read as
	\begin{equation}\label{EL_new_3}
	\begin{dcases}
	\ddot{\alpha}_i=0\hspace{0.6 cm}  & \hspace{0.10 cm}t\in (0,+\infty)\\
	\ddot{\gamma}_i=-\frac{\beta}{2}\sin\left(2\gamma\right)\hspace{0.6 cm}  & \hspace{0.10 cm}t\in (0,+\infty)\\
	\alpha(0)=\alpha_0, \hspace{0.16 cm} \dot{\alpha}(T)\underset{T\to +\infty}{\longrightarrow}0\\
	\gamma(0)=\gamma_0, \hspace{0.16 cm} \dot{\gamma}(T)\underset{T\to +\infty}{\longrightarrow}0.
	\end{dcases}
	\end{equation}
	This entails that
	\begin{equation}\label{alpha_const}
	\alpha(t)\equiv \alpha_0.
	\end{equation}
	Furthermore, for any integer $k$, $\cos((2k+1)\pi)<0$.
	Therefore, we are in position to conclude applying Proposition \ref{prop4gen} to the functional
	\begin{equation}
	K\left(\gamma_i\right)\coloneqq \frac{1}{2}\int_0^{\infty} \left[\|\dot{\gamma}_i\|^2+\beta\left|\cos\left(\gamma\right)\right|^2\right] dt.
	\end{equation}
	In case $\mbox{argmin}\left(Q_i\right)\subset \mathbb{T}^2$ is a continuum, the above proof can be seen from the point of view of phase analysis. Indeed, the Euler-Lagrange equations reduce to the pendulum-like equation
	\begin{equation}\label{EL_new_3_gamma}
	\begin{dcases}
	\ddot{\gamma}=-\frac{\beta}{2}\sin\left(2\gamma\right)\hspace{0.6 cm}  & \hspace{0.10 cm}t\in (0,+\infty)\\
	\gamma(0)=\gamma_0, \hspace{0.16 cm} \dot{\gamma}(T)\underset{T\to +\infty}{\longrightarrow}0.
	\end{dcases}
	\end{equation}
	\begin{figure}[htp]
		\begin{center}
			\includegraphics[width=11cm]{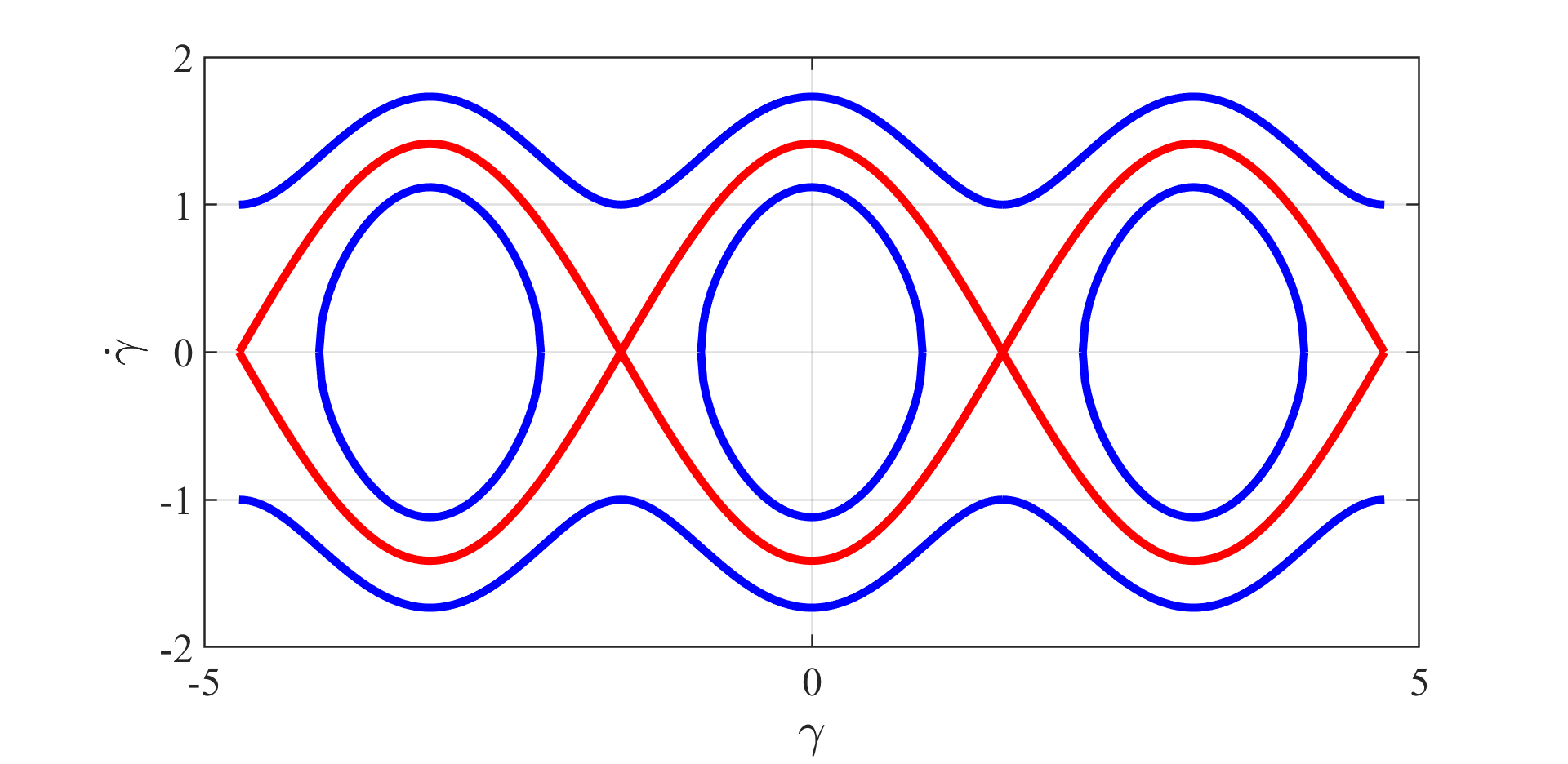}\\
			\caption{Phase portrait for the Euler-Lagrange equations in the balanced case. The red curve is the separatrix.}
			\label{phaseportrait1}
		\end{center}
	\end{figure}
	We have the end condition $\dot{\gamma}(T)\underset{T\to +\infty}{\longrightarrow}0$. Then, any solution $\gamma$ of \eqref{EL_new_3_gamma} lies on the separatrix (the red curve in figure \ref{phaseportrait1}), so that it must stabilize towards some steady state.
\end{proof}

\subsection{Numerical simulations}
\label{subsec:2.5}

In order to perform some numerical simulations, we firstly discretize our functional \eqref{functional_gen} and then we run \verb!AMPL!-\verb!IPOpt! to minimize the resulting discretized functional.

For the purpose of the numerical simulations, it is convenient to rewrite \eqref{functional_gen} as
\begin{equation}
\widetilde{K}\left(\psi,\Phi\right) \coloneqq \int_0^{\infty}\frac12 \|\dot{\Phi}\|^2+Q\left(\Phi\right)dt,
\end{equation}
subject to the state equation
\begin{equation}\label{}
\begin{dcases}
\frac{d}{dt}\Phi=\psi\hspace{0.6 cm} &t\in (0,+\infty)\\
\Phi(0)=\Phi_0.\\
\end{dcases}
\end{equation}

\subsubsection{Discretization}
\label{subsubsec:2.5.1}
Let $\varepsilon >0$ and suppose we want to get $\varepsilon$-close to the stable configuration at some time $T>0$, i.e. $\mbox{dist}(\Phi\left(T\right),\mathscr{Z})<\varepsilon$. Following \eqref{lemma_init_datum_estimate_eq2}, such an $\varepsilon$-stabilization can be achieved by choosing
\begin{equation}
	T>\frac{\sigma_2}{\varepsilon^{\tilde{N}N}}\mbox{dist}(\Phi_0,\mathscr{Z})
\end{equation}
and $Nt\in\mathbb{N}\setminus \left\{0,1\right\}$ large enough. Set $\Delta t\coloneqq \frac{T}{Nt-1}$. The discretized state is $\left(\Phi_i\right)_{i=0,\dots,Nt-1}$, whereas the discretized control (velocity) is\\
$\left(\psi_{i}\right)_{i=0,\dots,Nt-2}$. The discretized functional reads as
\begin{equation}\label{discretized_functional_gen}
\widetilde{K_d}\left(\psi,\Phi\right) \coloneqq \Delta t\sum_{i=0}^{Nt-1}\left[\frac12 \|\psi_i\|^2+Q\left(\Phi_i\right)\right],
\end{equation}
subject to the discretized state equation
\begin{equation}\label{discretized_state_equation}
\frac{\Phi_i-\Phi_{i-1}}{\Delta t}=\psi_{i-1},\hspace{0.3 cm}i=1,\dots,Nt-1.
\end{equation}
\subsubsection{Algorithm execution}
\label{subsubsec:2.5.2}

By \eqref{discretized_state_equation} and \eqref{discretized_functional_gen}, the discretized minimization problem is
\begin{equation}
\mbox{minimize}\hspace{0.3 cm}\widetilde{K_d},\hspace{0.3 cm}\mbox{subject to \eqref{discretized_state_equation}}.
\end{equation}
We address the above minimization problem by employing the interior-point optimization routine \verb!IPOpt! (see \cite{IDO} and \cite{waechter2009introduction}) coupled with \verb!AMPL! \cite{FAP}, which serves as modelling language and performs the automatic differentiation. The interested reader is referred to \cite[Chapter 9]{TAT} and \cite{SMA} for a survey on existing numerical methods to solve an optimal control problem.

In figures \ref{optcontrollab1}, \ref{optcontrollab2}, \ref{optcontrollab3} and \ref{optcontrollab4}, we plot the computed optimal trajectory for \eqref{functional}, with initial datum $\Phi_0=\left(\alpha_{0,1},\gamma_{0,1};\alpha_{0,2},\gamma_{0,2}\right)\coloneqq \left(2.6,0.6, 2.5,1.5\right)$. We choose $F$, $N$ and $m_i$ (see table \ref{notation_table}), such that the condition \eqref{prop_group_eq4} is fulfilled. The exponential stabilization proved in Proposition \ref{prop_group} emerges. In figure \ref{systemresponse}, we depict the imbalance indicator versus time, along the computed trajectories. As expected, it decays to zero exponentially.


\begin{figure}[htp]
	\begin{center}
		\includegraphics[width=11cm]{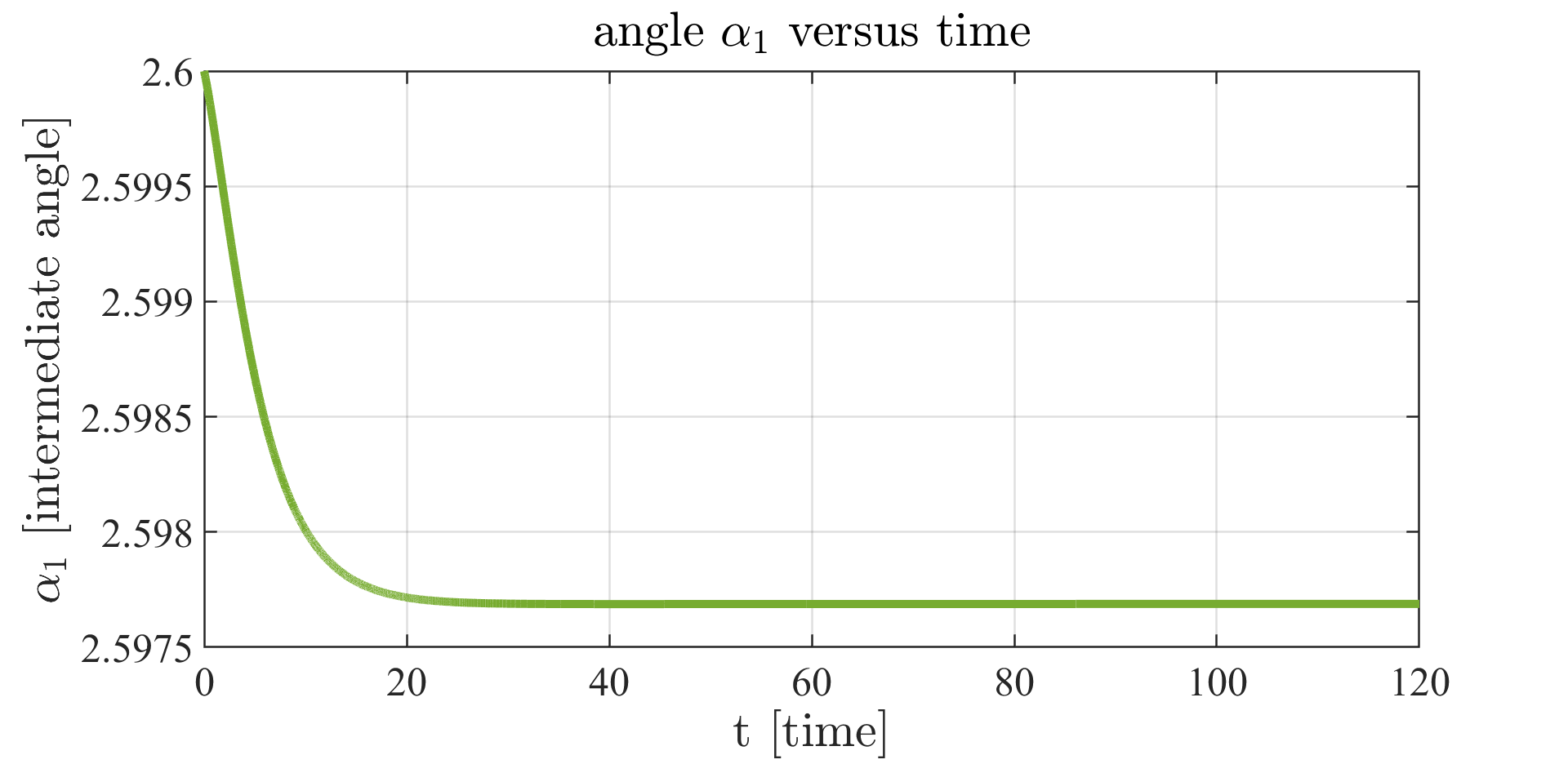}\\
		\caption{Intermediate angle $\alpha_1$ versus time}
		\label{optcontrollab1}
	\end{center}
\end{figure}

\begin{figure}[htp]
	\begin{center}
		\includegraphics[width=11cm]{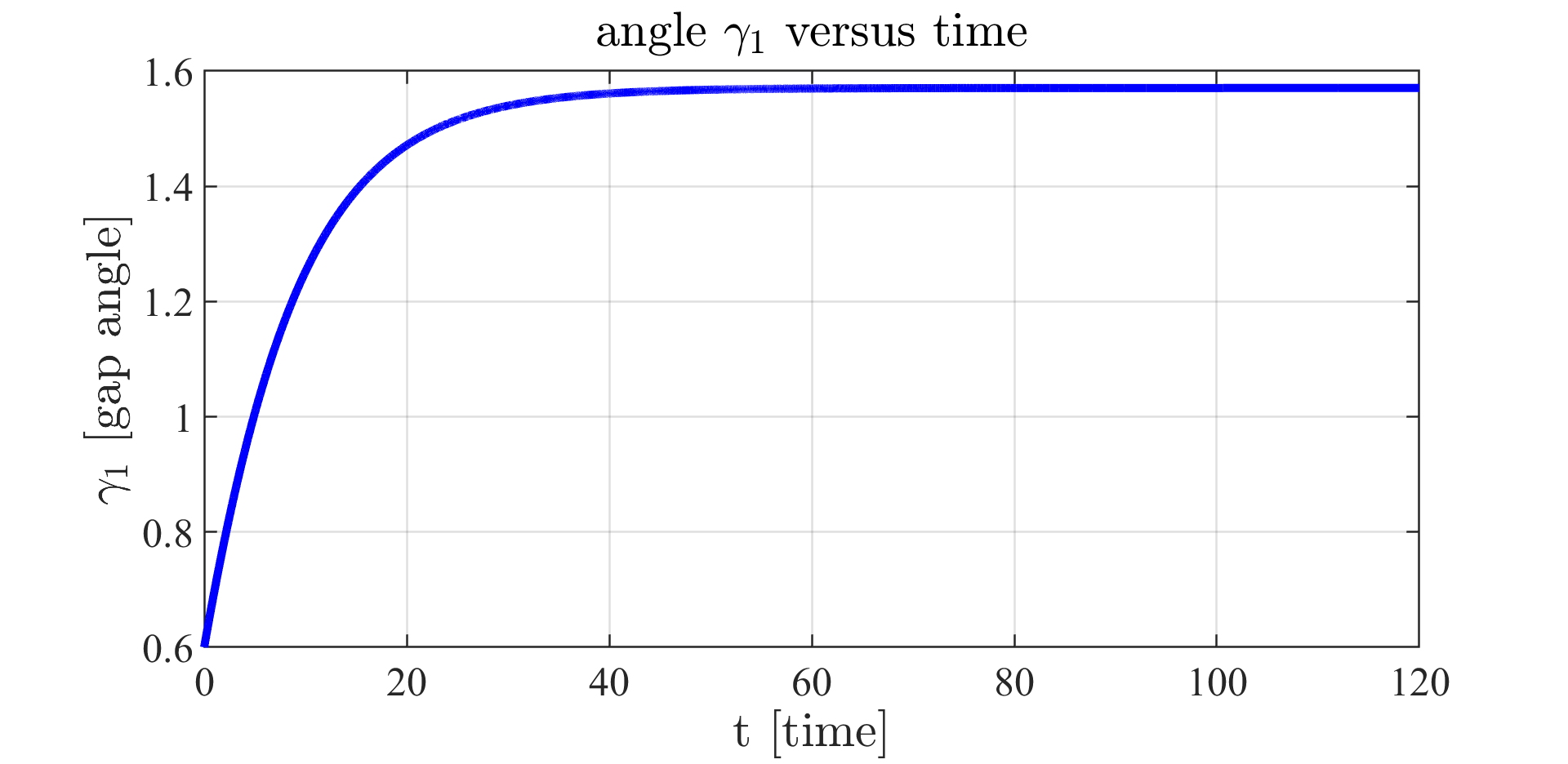}\\
		\caption{Gap angle $\gamma_1$ versus time}
		\label{optcontrollab2}
	\end{center}
\end{figure}

\begin{figure}[htp]
	\begin{center}
		\includegraphics[width=11cm]{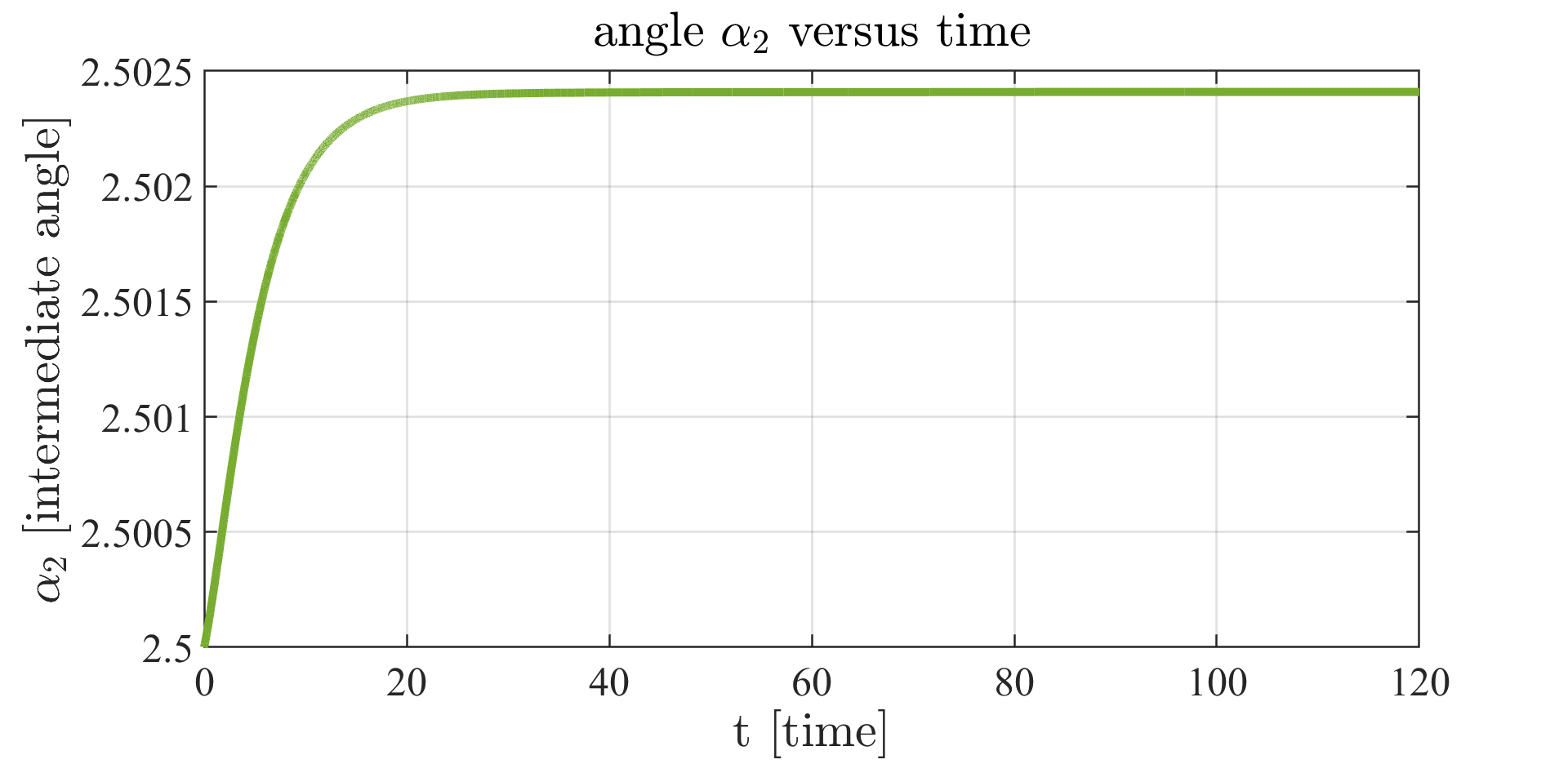}\\
		\caption{Intermediate angle $\alpha_2$ versus time}
		\label{optcontrollab3}
	\end{center}
\end{figure}

\begin{figure}[htp]
	\begin{center}
		\includegraphics[width=11cm]{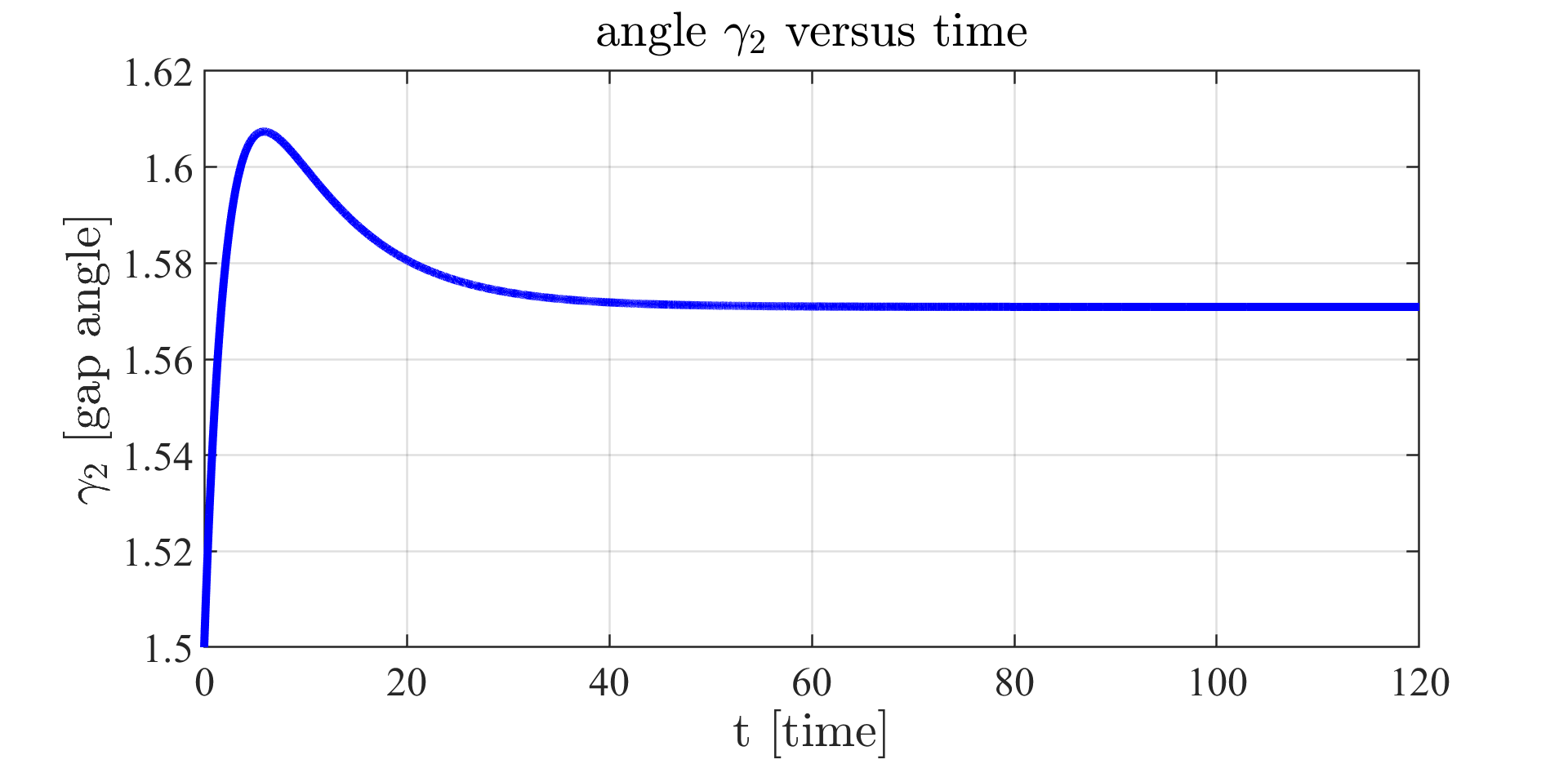}\\
		\caption{Gap angle $\gamma_2$ versus time}
		\label{optcontrollab4}
	\end{center}
\end{figure}

\begin{figure}[htp]
	\begin{center}
		\includegraphics[width=11cm]{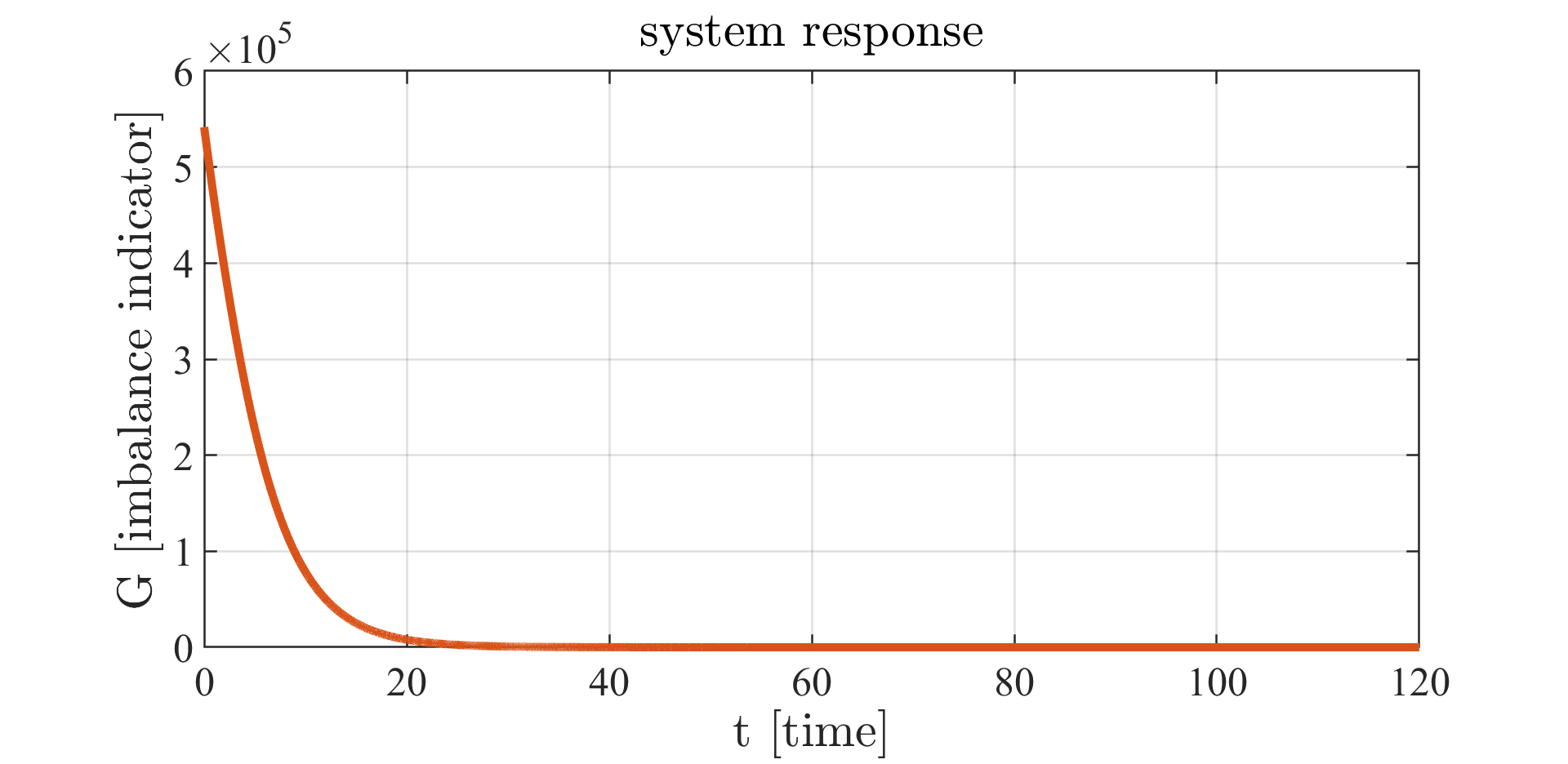}\\
		\caption{The imbalance indicator $G$ along the computed trajectory versus time.}
		\label{systemresponse}
	\end{center}
\end{figure}

\clearpage

\section{Reinforcement Learning and the closed-loop solution}
\label{sec:3}

So far, we presented an open-loop control strategy. The purpose of this section is to introduce a feedback strategy obtained by Reinforcement Learning \cite{RDP,sutton2011reinforcement,bokanowski2015value}. All throughout the section, we will work in the context of section \ref{subsec:2.3}, under the assumption $\mathscr{Z}\neq \varnothing$ and $Q$ real analytic. We start by defining the value function
\begin{equation}
V\left(\theta\right)=\inf_{\mathscr{A}_{\tiny{\theta}}}J=\inf_{\mathscr{A}_{\tiny{\theta}}}\int_0^{\infty} \frac{1}{2}\|\dot{\Phi}\|^2+Q(\Phi) \ dt,
\end{equation}
with
\begin{equation}\label{admissible_traj_theta}
\mathscr{A}_{\tiny{\theta}}\coloneqq \Bigl\{\Phi\in H^1_{loc}([0,+\infty);\mathbb{T}^n) \hspace{0.3 cm} \big| \hspace{0.3 cm} \Phi(0)=\theta\Bigr.
\end{equation}
\begin{equation}
\Bigl.\mbox{and}\hspace{0.3 cm}L(\Phi,\dot{\Phi})\in L^1(0,+\infty)\Bigr\}.
\end{equation}

Reasoning as in \cite[Theorem 6.4.8 section 6.4]{cannarsa2004semiconcave}, an optimal trajectory can be obtained by solving the (closed-loop) Ordinary Differential Equation
\begin{equation}
	\dot{\Phi}(t) = -\nabla V\left(\Phi(t)\right),\hspace{0.3 cm}t>0,
\end{equation}
with initial condition $\Phi(0)=\theta$. The feedback law is $-\nabla V\left(\Phi\right)$. Hence, the main task is to determine the value function $V$. We will show how to do this by using a value iteration algorithm of Reinforcement Learning. The convergence will be guaranteed by choosing an initial guess suggested by the stabilization/turnpike phenomenon \eqref{prop_group_eq1}. Let us mention that another approach will be to employ the analytical approximation methods developed in \cite{SMA}, which are designed to determine directly $\nabla V$.

To that end, let us write the Dynamic Programming Principle (DPP) for $V$ forward in time (see e.g. \cite[Lemma 4.3]{THJ}). Arbitrarily fix $\delta>0$:
\begin{equation}\label{eq: dynamic programming}
V\left(\theta\right) = \inf_{\Phi\in \mathscr{A}_{\tiny{\theta}}} \left\{\int_{0}^{\delta} \frac{1}{2}\|\dot{\Phi}\|^2+Q(\Phi) \ ds + V\left(\Phi(\delta)\right)\right\},
\end{equation}
where $\theta$ is an arbitrary initial configuration and $\mathscr{A}_{\tiny{\theta}}$ has been defined in \eqref{admissible_traj_theta}.

We will approximate $V$ as limit of a recursive sequence defined as
\begin{equation}\label{eq: rec_eq_prel}
V_{i+1}\left(\theta\right) = \inf_{\Phi\in \mathscr{A}_{\tiny{\theta}}} \left\{\int_{0}^{\delta} \frac{1}{2}\|\dot{\Phi}\|^2+Q(\Phi) \ ds + V_{i}\left(\Phi(\delta)\right)\right\},
\end{equation}
with $i\in \mathbb{N}$ and $\theta \in \mathbb{T}$. As announced, the choice of the initial guess is a key point.

\subsection{Initial guess}
\label{subsec:3}

We start by constructing an initial guess $V_0\in C^{0}\left(\mathbb{T}^n\right)$.

Define
\begin{equation}
	V_0:\mathbb{T}^n\longrightarrow \mathbb{R}^+,
\end{equation}
\begin{equation}\label{initial_guess_V0}
	V_0\left(\theta\right)\coloneqq \frac12 \mbox{dist}\left(\theta,\mathscr{Z}\right)^2+\frac{L}{2}\mbox{dist}\left(\theta,\mathscr{Z}\right),
\end{equation}
with $L=\max_{\mathbb{T}^n}\left\|\nabla Q\right\|$.

\subsection{Approximating sequence}
\label{subsec:3.2}

Let us define a sequence $\left\{V_i\right\}_{i\in \mathbb{N}}\subset L^{\infty}\left(\mathbb{T}^n\right)$ approximating $V$. The initial guess has been defined in \eqref{initial_guess_V0}, while for any $i\in \mathbb{N}$ we set
\begin{equation}\label{eq: rec_eq}
V_{i+1}\left(\theta\right) \coloneqq \inf_{\Phi\in \mathscr{A}_{\tiny{\theta}}} \left\{\int_{0}^{\delta} \frac{1}{2}\|\dot{\Phi}\|^2+Q(\Phi) \ ds + V_{i}\left(\Phi(\delta)\right)\right\},
\end{equation}
for $\theta \in \mathbb{T}$.

\subsection{Convergence of the algorithm}
\label{subsec:3.3}

By using the stabilization/turnpike estimate \eqref{lemma_init_datum_estimate_eq2}, we prove the convergence of the algorithm.

\begin{proposition}\label{prop_alg_conv}
	In the notation of Lemma \ref{lemma_init_datum_estimate}, suppose $\mathscr{Z}\neq \varnothing$ and $Q$ is real analytic. For any $\varepsilon >0$ and for every
	\begin{equation}\label{cond_on_i}
		i> \frac{2\pi \sqrt{n}\sigma_2}{\delta\left(\frac{-L+\sqrt{L^2+8\varepsilon}}{2}\right)^{\tilde{N}N}}
	\end{equation}
	we have
	\begin{equation}
		\left|V_i\left(\theta\right)-V\left(\theta\right)\right|<\varepsilon, \hspace{0.3 cm}\forall \ \theta\in \mathbb{T}.
	\end{equation}
\end{proposition}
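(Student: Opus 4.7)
The plan is a standard sandwich argument: I will establish $V\le V_i\le V+(\text{controllable tail})$ and then apply the polynomial stabilization estimate \eqref{lemma_init_datum_estimate_eq2} to force the tail below $\varepsilon$. The first step is to compare $V$ with the initial guess $V_0$. Given $\theta\in\mathbb{T}^n$, let $\overline\theta\in\mathscr{Z}$ satisfy $\|\theta-\overline\theta\|=\mbox{dist}(\theta,\mathscr{Z})$ and consider the affine interpolation from $\theta$ to $\overline\theta$ on $[0,1]$, extended by the constant $\overline\theta$ on $[1,+\infty)$. Repeating the computation of Step 1 of the proof of Lemma \ref{lemma_init_datum_estimate} yields
\[
V(\theta)\;\le\; \frac12\,\mbox{dist}(\theta,\mathscr{Z})^2+\frac{L}{2}\mbox{dist}(\theta,\mathscr{Z})\;=\;V_0(\theta).
\]
Since the operator defining the recursion \eqref{eq: rec_eq} is clearly monotone (if $f\le g$, then its image on $f$ is $\le$ its image on $g$), and since $V$ is a fixed point of that operator by the DPP \eqref{eq: dynamic programming}, a one-line induction on $i$ propagates $V\le V_i$ from $i=0$ to every $i\in\mathbb{N}$.

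For the upper bound on $V_i$, let $\Phi^\ast\in\mathscr{A}_{\theta}$ be an optimal trajectory for $V(\theta)$, whose existence is guaranteed by Proposition \ref{prop3_gen} applied with initial datum $\theta$. Using $\Phi^\ast|_{[0,\delta]}$ as a competitor in the infimum defining $V_1(\theta)$ and iterating, I expect the bound
\[
V_i(\theta)\;\le\;\int_0^{i\delta}\!\left[\tfrac12\|\dot\Phi^\ast\|^2+Q(\Phi^\ast)\right]ds\,+\,V_0\bigl(\Phi^\ast(i\delta)\bigr)
\]
to follow by induction on $i$: the inductive step splits $\Phi^\ast$ at time $\delta$, uses the first $\delta$-window as a test trajectory for $V_{k+1}(\theta)$, and invokes the inductive hypothesis on the shifted tail starting at $\Phi^\ast(\delta)$. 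Because $\Phi^\ast$ is optimal for $V(\theta)$, the DPP provides the identity $V(\theta)=\int_0^{i\delta}[\tfrac12\|\dot\Phi^\ast\|^2+Q(\Phi^\ast)]ds+V(\Phi^\ast(i\delta))$. Subtracting it from the previous display, and using $V\ge 0$ (immediate from $Q\ge 0$), I obtain
\[
0\;\le\;V_i(\theta)-V(\theta)\;\le\;V_0\bigl(\Phi^\ast(i\delta)\bigr)\;=\;\frac12\,d_i^{2}+\frac{L}{2}\,d_i,\qquad d_i\coloneqq \mbox{dist}\bigl(\Phi^\ast(i\delta),\mathscr{Z}\bigr).
\]

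The final step is to pick $i$ large enough to force the right-hand side below $\varepsilon$. Solving the quadratic $\tfrac12 d^{2}+\tfrac{L}{2}d<\varepsilon$ in $d\ge 0$ gives the sharp bound $d_i<\tfrac{-L+\sqrt{L^2+8\varepsilon}}{2}$. The polynomial stabilization estimate \eqref{lemma_init_datum_estimate_eq2}, applied to $\Phi^\ast$ at time $t=i\delta$, combined with the trivial bound $\mbox{dist}(\theta,\mathscr{Z})\le 2\pi\sqrt{n}$, yields $d_i\le \sqrt[N\tilde N]{2\pi\sqrt{n}\,\sigma_2/(i\delta)}$; requiring this upper bound to fall strictly below $\tfrac{-L+\sqrt{L^2+8\varepsilon}}{2}$ is, after raising to the power $N\tilde N$ and rearranging, exactly the threshold \eqref{cond_on_i}. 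The only step with any real content is the inductive upper bound for $V_i(\theta)$ in the second paragraph; everything else reduces either to the monotonicity of the Bellman operator, the explicit algebraic form of $V_0$, or a direct invocation of Lemma \ref{lemma_init_datum_estimate} already established in the paper.
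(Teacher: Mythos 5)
Your proposal is correct and follows essentially the same route as the paper: the lower bound $V\le V_i$ via monotonicity of the Bellman recursion from $V\le V_0$, the upper bound obtained by telescoping the recursion along an optimal trajectory with the DPP so that $V_i(\theta)-V(\theta)\le V_0\bigl(\Phi^\ast(i\delta)\bigr)$, and the conclusion via the decay estimate \eqref{lemma_init_datum_estimate_eq2} and the quadratic threshold $\frac{-L+\sqrt{L^2+8\varepsilon}}{2}$. The paper phrases the telescoping as a backward induction on $i$ along the optimal trajectory (its Step 3), but the content is identical to your explicit unrolled inequality.
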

\begin{proof}[Proof of Proposition \ref{prop_alg_conv}]
	\textit{Step 1} \  \textbf{Prove $V\left(\theta\right)\leq V_0\left(\theta\right)$, for any $\theta \in \mathbb{T}^n$.}\\
	Let $\overline{\theta}\in \mathscr{Z}$, such that $\left\|\theta-\overline{\theta}\right\|=\mbox{dist}(\theta,\mathscr{Z})$. Consider the trajectory
	\begin{equation}\widehat{\Phi}(t)\coloneqq
	\begin{dcases}
	(1-t)\theta+t\overline{\theta} \hspace{0.3 cm} &t\in [0,1)\\
	\overline{\theta} \hspace{0.3 cm} &t\in [1,+\infty).\\
	\end{dcases}
	\end{equation}
Proceeding as in Step 1 of the proof of Lemma \ref{lemma_init_datum_estimate}, we get
	\begin{equation}\label{}
	V\left(\theta\right)\leq K\left(\widehat{\Phi}\right)\leq \mbox{dist}(\Phi_0,\mathscr{Z})^2+\frac{L}{2}\mbox{dist}(\Phi_0,\mathscr{Z})=V_0\left(\theta\right).
	\end{equation}
	\textit{Step 2} \  \textbf{Prove $V\left(\theta\right)\leq V_i\left(\theta\right)$, for each $i\in \mathbb{N}$ and for any $\theta \in \mathbb{T}^n$.}\\
	We proceed by induction on $i\in \mathbb{N}$. By step 1, the assertion holds for $i=0$. Let us assume
	\begin{equation}
		V\left(\theta\right)\leq V_i\left(\theta\right), \hspace{0.3 cm}\forall \ \theta \in \mathbb{T}^n
	\end{equation}
	and prove
	\begin{equation}
		V\left(\theta\right)\leq V_{i+1}\left(\theta\right), \hspace{0.3 cm}\forall \ \theta \in \mathbb{T}^n.
	\end{equation}
	By definition, for any $\eta > 0$, there exists $\Phi_{\eta}\in \mathscr{A}_{\tiny{\theta}}$ such that
	\begin{equation}\label{prop_alg_conv_eq6}
		V_{i+1}\left(\theta\right) + \eta > \left\{\int_{0}^{\delta} \frac{1}{2}\|\dot{\Phi}_{\eta}\|^2+Q\left(\Phi_{\eta}\right) \ ds + V_{i}\left(\Phi_{\eta}(\delta)\right)\right\}.
	\end{equation}
	Now, by induction assumption,
	\begin{align}\label{prop_alg_conv_eq8}
	&\left\{\int_{0}^{\delta} \frac{1}{2}\|\dot{\Phi}_{\eta}\|^2+Q\left(\Phi_{\eta}\right) \ ds + V_{i}\left(\Phi_{\eta}(\delta)\right)\right\} \nonumber\\
	&\geq\left\{\int_{0}^{\delta} \frac{1}{2}\|\dot{\Phi}_{\eta}\|^2+Q\left(\Phi_{\eta}\right) \ ds + V\left(\Phi_{\eta}(\delta)\right)\right\},
	\end{align}
	whence, by \eqref{prop_alg_conv_eq6}, we obtain
	\begin{equation}\label{}
	V_{i+1}\left(\theta\right) + \eta > \left\{\int_{0}^{\delta} \frac{1}{2}\|\dot{\Phi}_{\eta}\|^2+Q\left(\Phi_{\eta}\right) \ ds + V\left(\Phi_{\eta}(\delta)\right)\right\}\geq V\left(\theta\right),
	\end{equation}
	where in the last inequality we have used the Dynamic Programming Principle (DPP) \eqref{eq: dynamic programming}. The arbitrariness of $\eta$ allows to conclude this step.
	
	\textit{Step 3} \  \textbf{For any $\varepsilon >0$ there exists $t_{\varepsilon}>0$ such that
	\begin{equation}\label{V_dist}
		\left|V_i\left(\Phi_{\theta}(t)\right)-V\left(\Phi_{\theta}(t)\right)\right|< \varepsilon,
	\end{equation}
	for any $t> t_{\varepsilon}-i\delta$, for each $i\in \mathbb{N}$ and $\theta \in \mathbb{T}$.}\\
	In the above expression and in the remainder of the proof, $\Phi_{\theta}$ denotes an optimal trajectory for \eqref{functional_gen} with initial configuration $\theta$. Set
	\begin{equation}\label{t_eps}
		t_{\varepsilon}\coloneqq \frac{2\pi \sqrt{n}\sigma_2}{\left(\frac{-L+\sqrt{L^2+8\varepsilon}}{2}\right)^{\tilde{N}N}}.
	\end{equation}
	We prove the assertion by induction. Let us start with $i=0$. By \eqref{lemma_init_datum_estimate_eq2}, for any $t> t_{\varepsilon}$ we have
	\begin{equation}
	\mbox{dist}\left(\Phi(t),\mathscr{Z}\right)<\frac{-L+\sqrt{L^2+8\varepsilon}}{2},
	\end{equation}
	whence
	\begin{equation}
	V_0\left(\theta\right)=\frac12 \mbox{dist}\left(\theta,\mathscr{Z}\right)^2+\frac{L}{2}\mbox{dist}(\Phi_0,\mathscr{Z})<\varepsilon.
	\end{equation}
	We suppose the assertion for $i$ and we prove it for $i+1$. By definition \eqref{eq: rec_eq}, we have
	\begin{equation}
	V_{i+1}\left(\Phi_{\theta}(t)\right)\leq \int_{t}^{t+\delta} \frac{1}{2}\|\dot{\Phi}_{\theta}\|^2+Q\left(\Phi_{\theta}\right) \ ds + V_{i}\left(\Phi_{\theta}(t+\delta)\right).
	\end{equation}
	Then, for any $t> t_{\varepsilon}-(i+1)\delta$
	\begin{align}
	\left|V_{i+1}\left(\Phi_{\theta}(t)\right)-V\left(\Phi_{\theta}(t)\right)\right|&= V_{i+1}\left(\Phi_{\theta}(t)\right)-V\left(\Phi_{\theta}(t)\right)\label{prop_alg_conv_eq20_sub1}\\
	&\leq\int_{t}^{t+\delta} \frac{1}{2}\|\dot{\Phi}_{\theta}\|^2+Q\left(\Phi_{\theta}\right) \ ds + V_{i}\left(\Phi_{\theta}(t+\delta)\right)\nonumber\\
	&\hspace{0.46 cm}-V\left(\Phi_{\theta}(t)\right)\nonumber\\
	&=\int_{t}^{t+\delta} \frac{1}{2}\|\dot{\Phi}_{\theta}\|^2+Q\left(\Phi_{\theta}\right) \ ds + V_{i}\left(\Phi_{\theta}(t+\delta)\right)\nonumber\\
	&\hspace{0.46 cm}-\int_{t}^{t+\delta} \frac{1}{2}\|\dot{\Phi}_{\theta}\|^2+Q\left(\Phi_{\theta}\right) \ ds\label{prop_alg_conv_eq20_sub6}\\
	&\hspace{0.46 cm}-V\left(\Phi_{\theta}\left(t+\delta\right)\right)\nonumber\\
	&=V_{i}\left(\Phi_{\theta}(t+\delta)\right)-V\left(\Phi_{\theta}\left(t+\delta\right)\right)\nonumber\\
	&<\varepsilon\label{prop_alg_conv_eq20_sub8},\\
	\end{align}
	where in \eqref{prop_alg_conv_eq20_sub1} we have employed step 2, in \eqref{prop_alg_conv_eq20_sub6}  the Dynamic Programming Principle (DPP) \eqref{eq: dynamic programming} and in \eqref{prop_alg_conv_eq20_sub8} the induction assumption together with $t+\delta> t_{\varepsilon}-i\delta$.
	
	\textit{Step 4} \  \textbf{Conclusion.}\\
	For any $i\in \mathbb{N}$ satisfying \eqref{cond_on_i}, we have $t_{\varepsilon}-i\delta <0$, where $t_{\varepsilon}$ has been defined in \eqref{t_eps}. Then, we apply \eqref{V_dist} with $t=0$ thus concluding.
\end{proof}

\section{Conclusions and perspectives}
\label{sec:4}

In this manuscript, a problem of rotors imbalance suppression has been addressed. A physical model has been conceived. The control problem has been formulated in the context of the Calculus of Variations, in an infinite time horizon. A general class of variational problems has been introduced, containing imbalance suppression as a particular case. In this general framework, well-posedness in infinite-time has been proved and Optimality Conditions have been derived both as second order Euler-Lagrange equations and first order Pontryagin system. The \L ojasiewicz inequality has been employed to prove convergence of the time optima towards the steady optima. Quantitative estimates of the rate of convergence have been obtained, without sign condition on the hessian of the imbalance indicator. In case the imbalance is below a given threshold, Stable Manifold theory has been used to obtain an exponential estimate of the speed of convergence. In case real-time feedback on the imbalance is available, a value iteration Reinforcement Learning algorithm has been proposed.

Both open-loop and feedback optimal controls have been designed. In the case of closed-loop, our Reinforcement Learning algorithm can be complemented by Hamilton-Jacobi theory (see e.g. \cite{SMA,bardi2008optimal}). The Hamilton-Jacobi equation for our functional \eqref{functional_gen} reads as
\begin{equation}\label{HJ}
\left\|\nabla V\left(\theta\right)\right\|^2=2Q\left(V\left(\theta\right)\right)\hspace{0.6 cm} \theta\hspace{0.03 cm}\in\mathbb{T}^n,
\end{equation}
where
\begin{equation}
V\left(\theta\right)=\inf_{\mathscr{A}_{\tiny{\theta}}}J=\inf_{\mathscr{A}_{\tiny{\theta}}}\int_0^{\infty} \frac{1}{2}\|\dot{\Phi}\|^2+Q(\Phi) \ dt,
\end{equation}
with
\begin{equation}
\mathscr{A}_{\tiny{\theta}}\coloneqq \Bigl\{\Phi\in H^1_{loc}([0,+\infty);\mathbb{T}^n) \hspace{0.3 cm} \big| \hspace{0.3 cm} \Phi(0)=\theta\Bigr.
\end{equation}
\begin{equation}
\Bigl.\mbox{and}\hspace{0.3 cm}L(\Phi,\dot{\Phi})\in L^1(0,+\infty)\Bigr\}.
\end{equation}
As we have seen in section \ref{sec:3}, the value function $V$ can be approximated numerically by a value iteration algorithm of Reinforcement Learning
. Another approach could be to employ numerical solvers for the Hamilton-Jacobi equation like ROC-HJ \cite{bokanowski2019user}. Furthermore, we could employ the analytical methods illustrated in \cite{SMA}, whose goal is to approximate directly $\nabla V$.
	
\section*{Appendix}
The appendix is devoted to the proof of Lemma \ref{lemma_argming}, Proposition \ref{prop3_gen} and Proposition \ref{prop_EL_no_final_condition}.

\section{Proof of Proposition \ref{prop3_gen}}

Now, we prove the well posedness of the time-evolution problem, by employing the direct methods in the Calculus of Variations.
\begin{proof}[Proof of Proposition \ref{prop3_gen}.]
	\textit{Step 1} \ \textbf{Boundedness of the minimizing sequence.}\\
	
	Let $\left\{\Phi^m\right\}_{m\in \mathbb{N}}\subset \mathscr{A}$ be a minimizing sequence for \eqref{functional_gen}. We wish to prove that $\left\{\dot{\Phi}^m\right\}_{m\in \mathbb{N}}\subset L^2((0,+\infty);\mathbb{R}^n)$ is bounded.
	
	By definition of minimizing sequence, if $m$ is large enough,
	\begin{equation}
	\frac{1}{2}\|\dot{\Phi}^m\|_{L^2}^2\leq K(\Phi^m)\leq \inf_{\mathscr{A}}K+1.
	\end{equation}
	Then, $\|\dot{\Phi}^m\|_{L^2}\leq M$ for any natural $m$, as desired.\\
	\textit{Step 2} \ \textbf{Weak convergence of the minimizing sequence in $\mathscr{A}$.}\\
	Now, for any $t\geq 0$,
	\begin{equation}
	\Phi^m(t)=\Phi_0+\int_0^t \dot{\Phi}^m(s)ds.
	\end{equation}
	Then, by Cauchy-Schwarz inequality, for any $T>0$, $\|\Phi^m\|_{L^{2}((0,T);\mathbb{T}^n)}\leq M\left(\sqrt{T}+1\right)$. Hence, by Banach-Alaoglu Theorem, there exists $\Phi\in H^1_{loc}((0,+\infty);\mathbb{T}^n)$ with $\dot{\Phi}\in L^2((0,+\infty);\mathbb{R}^n)$ such that, up to subsequences,
	\begin{equation}
	\Phi^m\underset{m\to \infty}{\longrightarrow}\Phi
	\end{equation}
	weakly in $H^1((0,T);\mathbb{T}^n)$ for any $T>0$ and
	\begin{equation}
	\dot{\Phi}^m\underset{m\to \infty}{\longrightarrow}\dot{\Phi},
	\end{equation}
	weakly in $L^2((0,+\infty);\mathbb{R}^n)$. Furthermore, the above convergence occurs point-wise. Indeed, for $t\geq 0$ and $T\geq t$, the linear operator
	\begin{equation}
	\delta_{t}:H^1((0,T);\mathbb{T}^n)\longrightarrow \mathbb{R}^n
	\end{equation}
	\begin{equation}
	\Phi\longmapsto \Phi(t)
	\end{equation}
	is continuous. Hence, by the definition of weak convergence,\\
	$\Phi^m(t)=\delta_t(\Phi^m)\longrightarrow \delta_{t}(\Phi)=\Phi(t)$. Since, for any natural $m$, $\Phi^m(0)=\Phi_0$, we have $\Phi(0)=\Phi_0$, whence $\Phi\in \mathscr{A}$, as required.\\
	\textit{Step 3} \ \textbf{Conclusion}\\
	By the lower semicontinuity of the norm with respect to the weak convergence
	\begin{equation}\label{lsc_dot}
	\int_0^{\infty}\|\dot{\Phi}\|^2 dt\leq \liminf_{m\to +\infty}\int_0^{\infty}\|\dot{\Phi}^m\|^2 dt.
	\end{equation}
	
	At this stage, we want to prove the inequality
	\begin{equation}\label{lsc_QPhi_old}
	\int_0^{\infty}Q(\Phi) dt\leq \liminf_{m\to +\infty}\int_0^{\infty}Q(\Phi^m) dt.
	\end{equation}
	Now, as we have shown in Step 2, $\Phi^m$ converges to $\Phi$ point-wise, whence
	\begin{equation}
	Q(\Phi^m(t))\longrightarrow Q(\Phi(t))
	\end{equation}
	for any $t\geq 0$.
	Furthermore, by Weierstrass theorem $Q:\mathbb{T}^n\longrightarrow \mathbb{R}^+$ is bounded. Then, for every $T>0$, by the Dominated Convergence Theorem,
	\begin{equation}
	Q\left(\Phi^m\right)\longrightarrow Q\left(\Phi\right)
	\end{equation}
	in the $L^1((0,T);\mathbb{R})$ norm, whence
	\begin{eqnarray*}
		\int_0^T Q\left(\Phi\right) dt&=&\lim_{m\to +\infty} \int_0^T Q\left(\Phi^m\right)dt\nonumber\\
		&\leq&\liminf_{m\to +\infty}\int_0^{\infty} Q\left(\Phi^m\right)dt.
	\end{eqnarray*}
	Hence, by arbitrariness of $T>0$,
	\begin{equation}\label{lsc_QPhi}
	\int_0^{\infty} Q\left(\Phi\right) dt\leq \liminf_{m\to +\infty}\int_0^{\infty} Q\left(\Phi^m\right)dt,
	\end{equation}
	i.e. \eqref{lsc_QPhi_old}.
	
	In conclusion, by \eqref{lsc_dot} and \eqref{lsc_QPhi}, we have
	\begin{eqnarray*}
		K(\Phi)&=&\frac{1}{2}\int_0^{\infty}\|\dot{\Phi}\|^2+Q\left(\Phi\right) dt\nonumber\\
		&\leq&\liminf_{m\to +\infty}\frac{1}{2}\int_0^{\infty}\|\dot{\Phi}^m\|^2+Q\left(\Phi^m\right) dt\nonumber\\
		&=&\inf_{\mathscr{A}}K,\nonumber\\
	\end{eqnarray*}
	whence $\Phi\in \mathscr{A}$ is the required minimizer. This finishes the proof.
\end{proof}

\section{Proof of Proposition \ref{prop_EL_no_final_condition}}

After proving the existence of minimizers for \ref{functional_gen}, we derive the Optimality Conditions.

\begin{proof}[Proof of Proposition of \ref{prop_EL_no_final_condition}]
	\textit{Step 1} \  \textbf{Regularity of $\Phi$ by the fundamental Lemma of the Calculus of Variations}\\
	Take $\Phi$ a minimizer of \eqref{functional_gen}. By \eqref{differential_gen} and Fermat's theorem, for any direction $v\in C^{\infty}_c((0,+\infty);\mathbb{R}^n)$, we have
	\begin{equation}\label{Fermat_cond_1}
	\int_0^{\infty} \dot{\Phi} \dot{v}+\nabla Q\left(\Phi\right)\cdot vdt=\langle dK(\Phi),v\rangle=0.
	\end{equation}
	Then, by the fundamental Lemma in the Calculus of Variations (see \cite{hobson1913fundamental}),
	$\Phi\in C^2([0,T];\mathbb{T}^n)$.\\
	\textit{Step 2} \  \textbf{Proof of (2)}\\
	Since $\Phi\in C^2$, we are allowed to integrate by parts in \eqref{Fermat_cond_1}, getting
	\begin{eqnarray*}
		0&=&\int_0^{\infty} \dot{\Phi} \dot{v}+\nabla Q\left(\Phi\right) vdt\nonumber\\
		&=&\lim_{T\to +\infty}\dot{\Phi}(T)v(T)+\int_0^{\infty}\left[-\ddot{\Phi}+ \nabla Q\left(\Phi\right) \right]v \hspace{0.03 cm} dt,
	\end{eqnarray*}
	which, thanks to the arbitrariness of $v$, leads to the differential equation in \eqref{EL_no_final_condition}. Furthermore, by bootstrapping in \eqref{EL_no_final_condition}, we have the $C^{\infty}$ regularity of the minimizer $\Phi$.\\
	\textit{Step 3} \  \textbf{Proof of (3)}\\
	Consider the energy
	\begin{equation}
	E(t)=\frac12\|\dot{\Phi}(t)\|^2-Q\left(\Phi(t)\right)
	\end{equation}
	and, take the time derivative
	\begin{equation}\label{energy_derivative_computation}
	\dot{E}(t)=\dot{\Phi}(t)\cdot \ddot{\Phi}(t)-\nabla Q\left(\Phi(t)\right)\cdot \dot{\Phi}(t) = -\left[-\ddot{\Phi}+ \nabla Q\left(\Phi\right)\right]\cdot \dot{\Phi}(t) =0,
	\end{equation}
	where in the last equality we have used the differential equation in \eqref{EL_no_final_condition}. Now, the integral $\int_0^{\infty}\|\dot{\Phi}\|^2 dt$ is finite, whence there exists a sequence $\left\{T_{q}\right\}\subset (0,+\infty)$ such that $T_q\underset{q\to \infty}{\longrightarrow}+\infty$ and
	\begin{equation}
	\dot{\Phi}(T_q)\underset{q\to +\infty}{\longrightarrow}0 \hspace{0.3 cm}\mbox{and}\hspace{0.3 cm}Q\left(\Phi(t)\right)\underset{q\to +\infty}{\longrightarrow}0.
	\end{equation}
	Therefore, the energy $E\left(T_q\right)\underset{q\to +\infty}{\longrightarrow}0$, whence, by using \eqref{energy_derivative_computation}, we have $E(t)\equiv 0$.
	
\end{proof}

\section{General mathematical notation}
\label{sec:Notation}

The circumference is denoted by
\begin{equation}
\mathbb{T}\coloneqq{\mathbb{R}}/{\sim},
\end{equation}
where $\varphi_1\sim \varphi_2$ if and only if there exists an integer $k$ such that $\varphi_2= \varphi_1+2k\pi$.

We introduce the following function spaces:
\begin{equation}
L^2_{loc}((0,+\infty);\mathbb{R}^n)\coloneqq\bigcap_{T>0} L^2((0,T);\mathbb{R}^n).
\end{equation}
\vspace{0.4 cm}
\begin{equation}
H^1((0,T);\mathbb{T}^n)\coloneqq \Bigl\{\Phi\in L^2((0,T);\mathbb{T}^n) \ | \Bigr.
\end{equation}
\begin{equation}
\Bigl. \Phi \mbox{ is weakly differentiable and }\dot{\Phi}\in L^2((0,T);\mathbb{T}^n)\Bigr\}.
\end{equation}
\vspace{0.4 cm}
\begin{equation}\label{H1loc}
H^1_{loc}([0,+\infty);\mathbb{T}^n)\coloneqq \left\{\Phi\in H^1((0,T);\mathbb{T}^n), \hspace{0.3 cm}\forall \hspace{0.03 cm}T>0\right\};
\end{equation}
\vspace{0.4 cm}
\begin{equation}
C^{\infty}_c((0,+\infty);\mathbb{R}^n)\coloneqq \left\{\Phi:[0,+\infty)\longrightarrow \mathbb{R}^n \ |\right.
\end{equation}
\begin{equation}
\Phi \mbox{ is infinitely many times differentiable}
\end{equation}
\begin{equation}
\left. \mbox{and}\hspace{0.12 cm}\mbox{supp}(\Phi)\subset\subset (0,+\infty)\right\}.
\end{equation}

	\bibliography{my_references}
	\bibliographystyle{siam}
	
\end{document}